\crefname{hypothesis}{Hypothesis}{Hypotheses}
\title{Finding the global optimum of a class of quartic minimization problem\thanks{Submitted to the editors DATE.
\funding{The first author was supported by the Tianjin Graduate Research and Innovation Project 2019YJSB040. The second author was supported by the National Natural Science Foundation of China Grant 11671217 and 12071234. The third author was supported by the National Natural Science Foundation of China Grant 11801100 and the Fok Ying Tong Education Foundation Grant 171094.}}}
\author{Pengfei Huang\thanks{School of Mathematical Sciences, Nankai University, Tianjin, P.R. China. (\email{huangpf@mail.nankai.edu.cn}).}
\and
Qingzhi Yang\thanks{Corresponding author, School of Mathematical Sciences and LPMC, Nankai University, Tianjin, P.R. China. (\email{qz-yang@nankai.edu.cn}).}
\and
Yuning Yang\thanks{College of Mathematics and Information Science, Guangxi University, Nanning, Guangxi, P.R. China. (\email{yyang@gxu.edu.cn}).}
}
\begin{document}
%\linenumbers
\maketitle

\begin{abstract}
    We consider a special nonconvex quartic minimization problem over a single spherical constraint, which includes  the discretized energy functional minimization problem of non-rotating Bose-Einstein condensates (BECs) as one of the important applications. Such a problem is studied by exploiting its characterization as a nonlinear eigenvalue problem with eigenvector nonlinearity (NEPv), which admits a unique nonnegative eigenvector, and this eigenvector is exactly the global minimizer to the quartic minimization.
    With these properties, any algorithm converging to the nonnegative stationary point of this optimization problem   finds its global minimum, such as the regularized Newton (RN) method. In particular, we   obtain the global convergence to global optimum of the inexact alternating direction method of multipliers (ADMM) for this problem. Numerical experiments for applications in non-rotating BEC validate our theories.
\end{abstract}

\begin{keywords}
    spherical constraint, nonlinear eigenvalue, Bose-Einstein condensation, ADMM, global minimizer
\end{keywords}

\begin{AMS}
    65K05, 65H17, 65N25
\end{AMS}

\section{Introduction}
In this paper, we consider the following nonconvex quartic optimization problem over a   spherical constraint:
\begin{equation}\label{equ:different}
\left\{
\begin{array}{lrc}
\underset{x\in \mathbb R^n}{\min} \quad \frac{\alpha}{2}\sum\limits_{i=1}^{n} x_i^4+x^{T}Bx\\
{\rm s.t.} \quad \|x\|_2^2=1,
\end{array}
\right.
\end{equation}
where $\alpha>0$ is a fixed constant and $B$ is an irreducible $n$ by $n$ symmetric $M$-matrix, with positive diagonal entries and nonpositive off-diagonal entries. An important application of this model is to find the ground state of the non-rotating Bose-Einstein condensation (BEC), which is usually defined as the minimizer of the energy functional minimization problem. After a suitable discretization, the matrix $B$ expresses as the sum of the discretized Laplacian operator and a positive diagonal matrix. See \cite{bao2013mathematical} and references therein for details.

 BEC has attracted great interest in the atomic, molecular and optical physics community and condense matter community \cite{pethick2008bose,fetter2009rotating}. As one of the major problems in the study of BEC, there are already several popular numerical methods that work well to compute the ground state. One class of these methods has been designed for  finding the smallest eigenvalue and corresponding eigenvector of the nonlinear eigenvalue problem with eigenvector nonlinearity (NEPv), which arises from the Gross-Pitaevskii equation (GPE), such as self-consistent field iteration (SCF) \cite{cai2018eigenvector}, full multigrid method \cite{jia2016full}, etc.. The second class deals with the nonconvex constrained minimization problem \eqref{equ:different} and its continuous version; see \cite{wu2017regularized,bao2004computing} and references therein. In fact, it is easy to show that the NEPv is the first-order optimality condition for this minimization problem. However, to the best of our knowledge, little of them give a theoretical guarantee about whether these methods find the best solution for both NEPv and optimization problems.

Although for algorithms solving nonconvex optimization problems, it is generally difficult to guarantee convergence to a global optimum, many approaches nevertheless can be applied to solve certain nonconvex problems with effective numerical results. To deal with the orthogonality including the spherical constraint, the constraint preserving algorithm has been proposed based on the manifold optimization theory, where a curvilinear search approach was introduced combined with Barzilai-Borwein step size \cite{wen2013feasible, hu2018adaptive}. In those work, convergence to a stationary point was established under some assumption. However, the manifold techniques are sophisticated and complex for implementation if additional constraints are imposed in general. Hu et al. \cite{hu2016note} showed the NP-completeness of \cref{equ:different} with a Hermitian $B$ by establishing its connection to the partition problem. And they solved it approximately by using semidefinite programming (SDP) relaxations, which is known to be time-consuming if the problem is large. The splitting method using Bregman iteration, which covers the alternating direction method of multipliers (ADMM), was also applied to solve orthogonality constrained problems \cite{lai2014splitting} without convergence analysis while presenting numerical results quite well. Zhang et al. \cite{zhang2019geometric} offered the geometric analysis of \cref{equ:different} when $B$ is with different structures, such as diagonality and rank-one. They also obtained meaningful results for general matrix $B$ utilizing fourth-order optimality conditions and strict-saddle property. We believe that for the special case considered in this paper, more specified results can be reached with simpler proof and convergence to the global optimum can be achieved for certain algorithms.

Triggered by the nice numerical results with BEC for both NEPv and \cref{equ:different}, we study the property of the first-order necessary condition for \cref{equ:different}, to give a hint to the global optimum. It is well known that, without rotation, the ground state can be taken as a real nonnegative function, and it corresponds to the smallest eigenvalue of NEPv in physics or partial differential equations theory \cite{cances2010numerical}. Taking advantage of the special structure, we give a rather simple proof from the linear algebraic point of view for such kinds of results. A similar idea was used in research about optimization of the trace ratio by Bai, et al. \cite{bai2018robust}. We first prove that for structured $B$, the NEPv has a unique positive eigenvector corresponding to the smallest eigenvalue, which is exactly a global optimum for \cref{equ:different}. Then we obtain convergence to the global optimizer of the regularized Newton (RN) method \cite{wu2017regularized,wen2013adaptive} trivially, and provide an analysis on the global convergence to a global minimum for ADMM based on the work of Wang, et al. \cite{wang2019global}.

In this paper, we begin the presentation in \cref{sec:pre} with preliminary. In \cref{sec:nepv}, we exploit the properties of NEPv corresponding to \cref{equ:different} and establish the relationship of the positive stationary point and the global minimum. In \cref{sec:algorithms}, firstly, we show that it is easy to obtain the convergence to a global minimizer of the RN method. Secondly, we state the standard ADMM and then derive global convergence to a global optimum for it in the inexact version. The application examples on non-rotating BEC problem and numerical performance are given in \cref{sec:numerical}. Concluding remarks are in \cref{sec:conclude}.

\section{Preliminary}
\label{sec:pre}

In this section, we define the notations and sort out some basic definitions and facts, which will be used in the subsequent analysis.
Throughout the paper, we follow the notations commonly used in numerical linear algebra. A vector $x>(\ge)0 ~(x\in \mathbb{R}^n)$, stands for $x_i>(\ge)0~(\forall i\in[n])$. $[n]$ denotes $\{1,2,\cdots,n\}$. We use bold lowercase letter $\textbf{x}$ for the spatial coordinate vector. $\|\cdot\|$ is the norm $\|\cdot\|_2$ for vectors and matrices. For $x\in \mathbb{R}^n$, $|x|=(|x_1|,|x_2|,\cdots,|x_n|)^T$, $\sqrt{x}=(\sqrt{x_1},\sqrt{x_2},\cdots,\sqrt{x_n})^T$. $B\succeq 0$ means that $B$ is positive semidefinite. $\lambda_{min}(B)$ and $\lambda_{max}(B)$ are the smallest and largest eigenvalue of $B$, respectively. "$\otimes$" denotes the kronecker product.

Let $\mathcal{A}$ be a
fourth-order diagonal tensor with all diagonal entries are one, then $\mathcal{A}x^4=\sum_{i=1}^n x_i^4$, $\mathcal{A}x^3=(x_1^3,~x_2^3,~\cdots,~x_n^3)^T$ is a vector, and $\mathcal{A}x^2$ is a diagonal matrix with $(x_1^2,~x_2^2,~\cdots,~x_n^2)$ as diagonal entries. For $x\in \mathbb{C}^n$, $\mathcal{A}x^Hxx^Hx=\sum_{i=1}^n |x_i|^4$.
\begin{definition}[Irreducibility/Reducibility \cite{varga1962iterative}]
A matrix $B\in \mathbb{R}^{n\times n}$ is called reducible, if there exists a nonempty proper index subset $I\subset[n]$, such that
$$b_{ij}=0,\quad \forall i\in I,\quad\forall j\notin I.$$
If $B$ is not reducible, then we call $B$ irreducible.
\end{definition}
\begin{definition}[$M$-matrix \cite{varga1962iterative}]
A matrix $B\in\mathbb{R}^{n\times n}$ is called an $M$-matrix, if $B=sI-A$, where $A$ is nonnegative, and $s\ge\rho(A)$. Here $\rho(A)$ is the spectral radius of $A$.
\end{definition}

\section{Nonlinear eigenvalue problems and global optimum}
\label{sec:nepv}

In this section, we characterize the spherical constraint minimization problem \cref{equ:different} by a nonlinear eigenvalue problem with eigenvector nonlinearity. It is in fact the first-order necessary condition of this constrained problem, since the linear independence constraint qualification (LICQ) holds at the local solution of \cref{equ:different} \cite{nocedal2006numerical}. First, let us define its Lagrangian function with multiplier $\lambda$:
\begin{equation*}
L(x,\lambda)=\frac{\alpha}{2}\mathcal{A}x^4+x^{T}Bx-\lambda(x^Tx-1).
\end{equation*}
Then we can get the following nonlinear eigenvector problem (NEPv):
\begin{equation}\label{equ:kkt}
\left\{
\begin{array}{lrc}
\alpha\mathcal{A}x^3+Bx=\lambda x\\
\|x\|^2=1.
\end{array}
\right.
\end{equation}
Any $(\lambda,~x)$ satisfying \cref{equ:kkt} is called an eigenpair of the NEPv. $\lambda$ and $x$ are the corresponding eigenvalue and eigenvector, respectively. It is obvious that $x$ is an eigenvector is equivalent to $x$ is a stationary point of \cref{equ:different}. There always exists an eigenpair for \cref{equ:kkt}, since \cref{equ:different} is to minimize a continuous function over a compact set.

In the following of this section, we give a rather simple proof to specify to which eigenvalue $x$ corresponds, if $x$ is nonnegative. Before that, we need some assumptions for the structure of \cref{equ:different}. Except stated otherwise, the throughout article will be discussed under the following assumption.
\begin{assumption}\label{ass:structure}
$B$ is an $n\times n$ symmetric irreducible $M$-matrix.
\end{assumption}

According to the definition of $M$-matrix, $B$ is positive semidefinite and the off-diagonal entries of $B$ are nonpositive.
\begin{remark}
The assumption for $B$ to be positive semidefinite is reasonable, since adding a quadratic term $\gamma \|x\|_2^2~(\forall \gamma \ge0)$ does not change \cref{equ:different}. $M$-matrix arises naturally in the numerical solution of elliptic partial differential equations. The Laplace operator with finite difference discretization is an $M$-matrix.
%Choi et al. \cite{choi2001generalization,choi2002global} have discussed %the unique positive solution of NEPv when fixed $\lambda$ %without the spherical constraint under \cref{ass:structure}, %based on the fixed point theory and the Perron-Frobenius %theorem for irreducible nonnegative matrices $B^{-1}$. %\cref{lem:eigen} bellow further obtains that for all %eigenvectors with the same norm and their related %eigenvalues, the positive one corresponds to the smallest %eigenvalue.
\end{remark}

Here we present two examples that satisfy \cref{ass:structure}.
\begin{example}
$B$ is a tridiagonal matrix with positive diagonal entries. If the sub-diagonal elements of $B$ are negative, then $B$ is irreducible. Furthermore, in this case, the nonnegative eigenvector of \eqref{equ:kkt} has no zero entry.
\begin{proof}
We first show that $B$ is irreducible. Suppose on the contrary that $B$ is reducible, according to the definition, there exists a nonempty proper index subset $I\subset[n]$, such that $b_{ij}=0,~\forall i\in I,~\forall j\notin I$. Let $k$ be the largest index in $I$, without loss of generality, we assume $k<n$. Then, $b_{kk+1}=0$. It contradicts the assumption that the sub-diagonal entries of $B$ are negative.

Suppose the nonnegative eigenvector $x\in \mathbb{R}^n$ has some $x_i=0,~i\in [n]$. Let $x_{i-1}=0$ if $i=1$ and $x_{i+1}=0$ if $i=n$, then according to \eqref{equ:kkt},
$$\alpha x_i^3+b_{ii}x_i+b_{ii-1}x_{i-1}+b_{ii+1}x_{i+1}=0.$$
Thus, $x_{i-1}=0$, $x_{i+1}=0$, and recursively, $x=0$, which contradicts the definition of eigenvector.
\end{proof}
\end{example}

\begin{example}
Consider the Bose-Einstein condensation (BEC) problem discretized by finite difference method, where the two-dimensional space domain $D=[0,1]\times[0,1]\backslash[0.5,1]\times[0.5,1]$ is "L"-like and the boundary value is zero. We divide $D$ evenly along both two directions with step $h=\frac{1}{n}$, and choose $n=6$. Then the discretized Laplacian operator will be
\begin{equation*}
\begin{split}
\tilde{B}=n^2\cdot
\begin{bmatrix}
B_1&\Sigma& & & &\\
\Sigma&B_1&\Sigma& & &\\
&\Sigma&B_1&\Sigma& &\\
& &\Sigma&B_2&-I_{5}\\
& & &-I_{5}&B_2
\end{bmatrix}
,~ where ~\Sigma=
\begin{bmatrix}
-1& & & & &\\
&-1& & & &\\
& &0& & &\\
& & &0& &\\
& & & &0 &
\end{bmatrix},\\
B_1=
\begin{bmatrix}
4&-1& & & &\\
-1&4& 0& & &\\
&0&0 &0 & &\\
& &0&0 &0 &\\
& & & 0&0 &
\end{bmatrix}
,\quad B_2=
\begin{bmatrix}
4&-1& & & &\\
-1&4&-1& & &\\
&-1&4&-1& &\\
& & -1&4&-1\\
& & & -1& 4
\end{bmatrix}.
\end{split}
\end{equation*}
That is, the columns and rows corresponding to $[0.5,1]\times[0.5,1]$ are zero. Removing these zero columns and rows will not affect the value of \cref{equ:different}, we obtain
\begin{equation*}
B=n^2\cdot
\begin{bmatrix}
\tilde{B}_1&-I_2& & &\\
-I_2&\tilde{B}_1&-I_2& &\\
&-I_2&\tilde{B}_1&-I_{2\times 5}&\\
& &-I_{5\times2}&B_2&-I_5\\
& & &-I_5&B_2
\end{bmatrix}+V,
\end{equation*}
where $\tilde{B}_1=(\begin{smallmatrix}4&-1\\-1&4\end{smallmatrix})$, and $V$ is a positive diagonal matrix dicretized from the external trapping potential (given in \cref{sec:numerical}). Here, $I_{m\times n}$ stands for an $m\times n$ matrix with diagonal entries being one and other elements being zero.
For the discretized non-rotating BEC problem, \cref{ass:structure} is satisfied through the recursive adjacency in the Laplacian operator.
\end{example}

Although we hope that \cref{ass:structure} to be satisfied by rotating BEC model, it seems it is not the case.
\begin{example}
Consider the rotating BEC problem and $\textbf{x}=(x,~y)^T\in \mathbb{R}^2$. There is a term looks like $-\Omega\bar{\phi}(\textbf{x})L_z\phi(\textbf{x})$, where $\Omega$ is an angular velocity, $L_z=-i(x\partial_y-y\partial_x)$. Let the two-dimensional space domain $D=[0,1]\times [0,1]$. We divide $\Omega$ evenly along both two directions with step $h=\frac{1}{n}$, and choose $n=4$. Through the finite difference method, $\partial_y(\phi(x_i,y_j))=\frac{\phi(x_i,y_{j+1})-\phi(x_i,y_{j-1})}{2h}$. We obtain discretized
$$L_z=-i(I_x\otimes D_y-D_x\otimes I_y),$$
where $I_x=I_y=h\left(\begin{smallmatrix}1&&\\&2&\\&&3\end{smallmatrix}\right)$ are diagonal matrices,  $D_x=D_y=\frac{1}{2h}\cdot\left(\begin{smallmatrix}0&1&0\\-1&0&1\\0&-1&0\end{smallmatrix}\right)$ are skew-symmetric matrices.  Now \cref{equ:different} should be considered over complex field and $B$ corresponds to the sum of the discretized Laplace operator, a diagonal matrix and $-\Omega L_z$. Thus, \cref{ass:structure} is not satisfied.
\end{example}

After some intuitive illustration of the structure of $B$, we move on to discuss the properties of the NEPv. We first define the geometric simplicity for the NEPv as Chang et al. \cite{chang2008perron}.
\begin{definition}[geometric simplicity of the NEPv]
Let $\lambda$ be an eigenvalue of NEPv \cref{equ:kkt}. We say that $\lambda$ is geometrically simple if the maximum number of linearly independent eigenvectors corresponding to $\lambda$ equals one. If we restrict the eigenvector $x$ on the real space, then we call $\lambda$ real geometrically simple; if $x$ is restricted on the complex space, then $\lambda$ is called complex geometrically simple.
\end{definition}
\begin{lemma}\label{lem:eigen}
Under \cref{ass:structure}, the eigenpair $(\lambda, ~x)$ has the following properties:
\begin{enumerate}
\item There exists an eigenpair $(\lambda,x)$ with $x\ge 0$.
\item The eigenvalue with an eigenvector $x\ge0$ is unique, and $x$ contains no zero entries, that is, $x>0$. Denote this $\lambda$ as $\lambda_0$.
\item $\lambda\ge \lambda_0>0$, for all eigenvalue $\lambda$, and $\lambda_0$ is real geometrically simple. That is, the smallest eigenvalue has and only has two eigenvectors, a nonnegative one and a nonpositive one.
\end{enumerate}
\end{lemma}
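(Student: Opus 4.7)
The plan is to treat the three items in sequence, relying on two tools: the sign inequality $v^T B v \geq |v|^T B |v|$ (valid because $B$ has nonpositive off-diagonal entries), and the convex reformulation obtained by the substitution $u_i = x_i^2$ on the simplex.

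Items~1 and~2 follow fairly directly. For item~1, the sphere is compact so a global minimizer $x^\ast$ of \cref{equ:different} exists; the quartic piece depends only on $|x^\ast|$ and the sign inequality gives $f(|x^\ast|) \leq f(x^\ast)$, so $|x^\ast|$ is itself a global minimizer and hence a nonnegative KKT point, i.e.\ a nonnegative eigenpair of \cref{equ:kkt}. For item~2, positivity comes first: if a nonneg eigenvector had $x_i = 0$ for some $i$, the $i$-th row of \cref{equ:kkt} reduces to $\sum_{j \ne i} B_{ij} x_j = 0$ with each summand nonpositive, so every summand vanishes, forcing $x_j = 0$ whenever $B_{ij} \ne 0$; irreducibility then propagates zero to all coordinates, contradicting $\|x\|=1$. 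For uniqueness, substituting $u_i = x_i^2$ on the nonneg sphere yields
\[
g(u) = \tfrac{\alpha}{2}\sum_i u_i^2 + \sum_{i,j} B_{ij}\sqrt{u_i u_j},
\]
which is strictly convex on the simplex: the quartic piece is strictly convex in $u$, and each coupling term $B_{ij}\sqrt{u_iu_j}$ with $i\ne j$ is convex because $-\sqrt{u_iu_j}$ is convex and $B_{ij}\leq 0$. A strictly convex function on the simplex has a unique minimizer $u^\ast$, interior since $\partial_k g \to -\infty$ as $u_k \to 0^+$; multiplying the KKT equation $\partial_k g(u^\ast) = \mu$ by $\sqrt{u_k^\ast}$ reconstructs \cref{equ:kkt} for $x_0 := \sqrt{u^\ast}$ with $\lambda_0 := \mu$, and any other nonneg eigenvector (positive by the first half) would produce a second KKT point of the same strictly convex program, which is impossible.

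For item~3, $\lambda_0 > 0$ follows by pairing \cref{equ:kkt} for $x_0$ with $x_0^T$: $\lambda_0 = \alpha \sum x_{0,i}^4 + x_0^T B x_0 > 0$, with $B\succeq 0$ and the first term strictly positive. To obtain $\lambda\geq\lambda_0$ for an arbitrary eigenpair $(\lambda,x)$, I would first derive the coordinatewise super-solution
\[
\alpha \mathcal{A}|x|^3 + B|x| \;\leq\; \lambda|x|,
\]
by multiplying the $i$-th row of \cref{equ:kkt} by $\mathrm{sgn}(x_i)$ and using $B_{ij}\leq 0$ together with $\mathrm{sgn}(x_i)x_j \leq |x_j|$. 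Pairing this super-solution with $x_0 > 0$ and invoking the Perron--Frobenius property of the irreducible $M$-matrix $M(x_0) := B + \alpha \diag(x_0^2)$ (whose smallest eigenvalue is $\lambda_0$ with unique positive eigenvector $x_0$) yields a Collatz--Wielandt-type comparison that forces $\lambda\geq\lambda_0$. Real geometric simplicity is the equality case: if $(\lambda_0,x)$ is an eigenpair with $x$ changing sign, then irreducibility applied to the sign-support partition of $x$ makes the super-solution strict at some coordinate, so the Collatz--Wielandt bound becomes strict and is incompatible with $\lambda=\lambda_0$; hence any $\lambda_0$-eigenvector is sign-definite and, by item~2, equals $\pm x_0$.

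The step I expect to be the main obstacle is converting the super-solution inequality into $\lambda\geq\lambda_0$. The naive pairing with $x_0$ leaves a residual of the form $\alpha\sum_i x_{0,i}|x|_i(|x|_i^2-x_{0,i}^2)/(x_0\cdot|x|)$ whose sign is not controlled by the normalization $\|x\|=\|x_0\|=1$ alone; absorbing it is where the full $M$-matrix structure (beyond mere nonpositivity of off-diagonals) should be essential, via Perron--Frobenius applied to $M(x_0)$ combined with the strict convexity of $g$ from item~2.
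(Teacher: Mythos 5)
Your items~1 and~2 are sound. Item~1 coincides with the paper's argument. For item~2, the positivity argument is the paper's; for uniqueness you take the convex-reformulation route ($u_i=x_i^2$ on the simplex, strict convexity of $g$ via concavity of the geometric mean), which is the paper's \emph{alternative} proof (its Lemmas~\ref{lem:equal}--\ref{lem:convex} in a remark) rather than its main argument, and in fact yields the slightly stronger conclusion that the nonnegative eigen\emph{vector} is unique, not just the eigenvalue. That part is a legitimate and arguably cleaner variant.

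Item~3, however, contains a genuine gap, and it is exactly the one you flag yourself: the step from the super-solution inequality $\alpha\mathcal{A}|x|^3+B|x|\le\lambda|x|$ to $\lambda\ge\lambda_0$ is not proved. Your proposed fix --- pair globally against $x_0$ and invoke Perron--Frobenius for $M(x_0)=B+\alpha\diag(x_0^2)$ --- does not close it: the super-solution involves $M(|x|)$, not $M(x_0)$, and $M(|x|)-M(x_0)=\alpha\diag(|x|^2-x_0^2)$ is an indefinite diagonal perturbation under the common normalization $\|x\|=\|x_0\|=1$, so neither the residual $\alpha\sum_i x_{0,i}|x_i|(|x_i|^2-x_{0,i}^2)$ nor the comparison $\lambda_{\min}(M(|x|))\ge\lambda_{\min}(M(x_0))$ has a controlled sign. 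Appealing to ``strict convexity of $g$'' does not supply the missing inequality. The paper avoids the global pairing entirely: it sets $t=\min_i x_{0,i}/|y_i|\in(0,1]$, so that $x_0\ge t|y|$ with equality at some extremal coordinate $k$, and compares the two relations \emph{only at that coordinate}; there the cubic term scales as $(t|y_k|)^3=t^3|y_k|^3\le t|y_k|^3$ (since $t\le1$) and the off-diagonal terms obey $b_{kj}x_{0,j}\le b_{kj}t|y_j|$ (since $b_{kj}\le0$), giving $\lambda_0 x_{0,k}\le t\lambda|y_k|=\lambda x_{0,k}$ directly. The troublesome residual never appears. Your geometric-simplicity argument inherits the same gap, since it relies on strictness of the very comparison you have not established; the paper instead extracts geometric simplicity from the equality case of the same extremal-coordinate argument (equality at $k$ forces $x_0=t|y|$ by irreducibility, hence $t=1$ and $|y|=x_0$, and then sign-definiteness of $y$ follows coordinatewise). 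To repair your proof you should replace the global pairing by this scaling-at-the-extremal-coordinate argument.
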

\begin{proof}
We prove this lemma in the case $\alpha=1$, while it is obvious that the proof can be generalized for all $\alpha>0$.

\textbf{1.} For any $x\in \mathbb{R}^n$,
\begin{equation}\label{equ:proof_nonnegative}
\begin{aligned}
\frac{1}{2}\mathcal{A}x^4+x^{T}Bx&=\frac{1}{2}\sum_{i}x_i^4+\sum_{i,j}b_{ij}x_ix_j\\
(\text{nonpositive off-diagonal of $B$})&\ge \frac{1}{2}\sum_{i}|x_i|^4+\sum_{i,j}b_{ij}|x_i||x_j|\\
&=\frac{1}{2}\mathcal{A}|x|^4+|x|^{T}B|x|.
\end{aligned}
\end{equation}
Thus, \cref{equ:different} has a nonnegative solution, which is a nonnegative eigenvector of \eqref{equ:kkt}.

\textbf{2.} For the positiveness of $x$, suppose $x$ is a nonnegative eigenvector, and there exists a nonempty set $I\subset[n]$, $\overline{I}=[n]\backslash I$ such that $x_i>0~(i\in \overline{I})$ and $x_i=0~(i\in I)$. For any $k\in I$, $b_{kj}=0~(\forall~j\notin I)$ follows from $\sum_{j\neq k}b_{kj}x_j=0$, since $b_{kj}\le 0$ when $i\neq j$. It contradicts the assumption that $B$ is irreducible.

For the uniqueness of $\lambda$ with positive eigenvector, suppose $(\lambda,x)$,~$(\mu,y)\in \mathbb{R}\times \mathbb{R}_+^{n}$ are two eigenpairs, then $x>0$, $y>0$, $\|x\|=\|y\|=1$. Denote $t=\min\limits_{i}\{\frac{x_i}{y_i}\}$, then $0<t\le 1$ and $x\ge ty$ with $x_k=ty_k$ for some k. We have
\begin{equation}\label{equ:proof2}
\begin{aligned}
\lambda x_k=x_k^3+b_{kk}x_k+\sum_{j\neq k}b_{kj}x_j &\le (ty_k)^3+b_{kk}(ty_k)+\sum_{j\neq k}b_{kj}(ty_j)\\
(0<t\le 1,~y_k>0)&\le ty^3_k+t(By)_k\\
&=t\mu y_k,
\end{aligned}
\end{equation}
then $\lambda \le \mu$, and as the same we can get $\mu\le\lambda$. So $\lambda=\mu$.

\textbf{3.} Suppose $(\mu,y)$ is an eigenpair; then $\mu=\mathcal{A}y^4+y^TBy>0$ since $B\succeq 0$. We first prove that $\mu|y|=|\mathcal{A}y^3+By|\ge\mathcal{A}|y|^3+B|y|$. Since $\mu y_i=y_i^3+b_{ii}y_i+\sum_{j\neq i}b_{ij}y_j$, if $y_i\ge0$,
\begin{equation*}
\begin{aligned}
\mu |y_i|&=y_i^3+b_{ii}y_i+\sum_{j\neq i}b_{ij}y_j\\
&\ge|y_i|^3+(B|y|)_i;
\end{aligned}
\end{equation*}
if $y_i\le0$, we can prove it similarly. The remaining part for $\mu\ge\lambda_0$ is analogous to the above proof of \cref{equ:proof2}, so we just omit it here.

Suppose $(\lambda_0,~y)$ is an eigenpair; we also have $\lambda_0|y|\ge\mathcal{A}|y|^3+B|y|$. According to \textbf{2.}, there is an eigenvector $x$ corresponding to $\lambda_0$ which is positive. There is a $t$ such that $0<t\le 1$, $x\ge t|y|\ge 0$ and $x_k=t|y_k|$ for some $k$. Then
\begin{equation}\label{equ:proof_unique}
\begin{aligned}
\lambda_0 x_k=x_k^3+b_{kk}x_k+\sum_{j\neq k}b_{kj}x_j &\le (t|y_k|)^3+b_{kk}(t|y_k|)+\sum_{j\neq k}b_{kj}(t|y_j|)\\
&\le t|y_k|^3+t(B|y|)_k\\
&\le t\lambda_0 |y_k|.
\end{aligned}
\end{equation}
Since $\lambda_0 x_k=t\lambda_0 |y_k|$, the first inequality in \cref{equ:proof_unique} equals. This leads to
$$\sum\limits_{j\neq k}b_{kj}(x_j-t|y_j|)=0.$$
We obtain that $x-t|y|=0$ as the proof for the positiveness of the nonpositive eigenvector.
Thus, it holds that
\[x=|y|, ~\mathcal{A}|y|^3+B|y|=\lambda_0|y|, ~\|y\|=1, ~|y|>0.\]

If $y_i>0$ for some $i$, then
\begin{equation*}
(\mathcal{A}y^3+By)_i=y_i^3+b_{ii}y_i+\sum_{j\neq i}b_{ij}y_j=|y_i|^3+b_{ii}|y_i|+\sum_{j\neq i}b_{ij}|y_j|,
\end{equation*}
which leads to the conclusion $y=x>0$. For $y_i<0$, we can get $y=-x<0$ similarly. Thus, $\lambda_0$ is real geometrically simple.
\end{proof}
\begin{remark}
From the proof of \cref{lem:eigen}, we can see that it is also possible to derive the NEPv characterization in the complex case,
\begin{equation}\label{equ:complex}
\left\{
\begin{array}{lrc}
\underset{x\in \mathbb{C}^n}{\min} \quad \frac{\alpha}{2}\mathcal{A}x^Hxx^Hx+x^{H}Bx\\
{\rm s.t.}\quad \|x\|^2=1.
\end{array}
\right.
\end{equation}
The lemma and theorem discussed above can be established using similar arguments. In particular, the smallest eigenvalue  has complex geometrically simplicity.
\end{remark}
\begin{remark}\label{rem:sdp}
We can prove the existence of the nonnegative optimum from a different aspect. In regard of the semidefinite relaxation of \cref{equ:different} as the following form,
\begin{equation}\label{equ:sdp1}
\left\{
\begin{array}{lrc}
\min\quad\frac{\alpha}{2}\sum\limits_{i=1}^nX_{ii}^2+\left<B, X\right>\\
{\rm s.t.} \quad tr(X) = 1\\
\qquad X\succeq 0,
\end{array}
\right.
\end{equation}
where $\left<B,X\right>=tr(B^TX)$ and $tr(X)$ denotes the trace of $X$. Yang et al. \cite{yang2019numerical} observed that the relaxation is tight based on \cref{lem:tight}, which is stronger than the tightness obtained by Hu et al. \cite{hu2016note} for general symmetric $B$.
\begin{lemma}\label{lem:tight}
When the off-diagonal entries of $B$ are nonpositive, \cref{equ:different} and \cref{equ:sdp1} are equal. If $X$ is an optimum for \cref{equ:sdp1}, then $x=\sqrt{diag(X)}$ is an optimum of \cref{equ:different}.
\begin{proof}
The proof follows \cite[Theorem 2.1]{yang2012solving}. If $x$ is a feasible point of \cref{equ:different}, then $xx^T$ is feasible for \cref{equ:sdp1}. Let v\cref{equ:different} and v\eqref{equ:sdp1} denote the optimal values of \cref{equ:different} and \cref{equ:sdp1}, respectively, we have $v\cref{equ:sdp1}\le v\cref{equ:different}$.
On the other hand, if $X$ is a feasible point of \cref{equ:sdp1}, let $x_i=\sqrt{X_{ii}}$; then $x$ is also a feasible point of \cref{equ:different} and
\begin{eqnarray*}
\frac{\alpha}{2}\mathcal{A}x^4+x^{T}Bx& = &\frac{\alpha}{2}\sum\limits_{i=1}^n\sqrt{X_{ii}}\sqrt{X_{ii}}\sqrt{X_{ii}}\sqrt{X_{ii}}+\sum\limits_{i,j=1}^nb_{ij}\sqrt{X_{ii}}\sqrt{X_{jj}}\\
&\le& \frac{\alpha}{2}\sum\limits_{i=1}^n X_{ii}^2+\sum\limits_{i,j=1}^nb_{ij}X_{ij}\\
&=&\frac{\alpha}{2}\sum\limits_{i=1}^n X_{ii}^2+\left<B, X\right>.
\end{eqnarray*}
Thus, $v\cref{equ:different}\le v\cref{equ:sdp1}$. The claimed results then follow.
\end{proof}
\end{lemma}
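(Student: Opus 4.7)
The plan is to establish $v\cref{equ:different} = v\cref{equ:sdp1}$ by two matching inequalities and then identify the construction that realizes equality. The crucial ingredient will be pairing positive semidefiniteness of $X$ with the sign structure of $B$: the $2 \times 2$ PSD minor inequality $|X_{ij}| \le \sqrt{X_{ii} X_{jj}}$, combined with $b_{ij} \le 0$ for $i \ne j$, reverses direction in exactly the way we need.

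For the direction $v\cref{equ:sdp1} \le v\cref{equ:different}$, I would take any feasible $x$ of \cref{equ:different} and lift it to $X := x x^T$. Then $X \succeq 0$ has rank one and $\mathrm{tr}(X) = \|x\|^2 = 1$, so $X$ is feasible for \cref{equ:sdp1}; moreover $X_{ii}^2 = x_i^4$ and $\langle B, X \rangle = x^T B x$, so the two objectives agree at this lifted point, and infimizing over the larger feasible set of the SDP gives the inequality.

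For the reverse direction, given a feasible $X$ of \cref{equ:sdp1}, I set $x_i := \sqrt{X_{ii}} \ge 0$. Feasibility in \cref{equ:different} is immediate from $\|x\|^2 = \sum_i X_{ii} = \mathrm{tr}(X) = 1$, and the quartic terms match exactly via $x_i^4 = X_{ii}^2$. The quadratic term is where the sign hypothesis kicks in: writing
\[x^T B x = \sum_i b_{ii} X_{ii} + \sum_{i \ne j} b_{ij} \sqrt{X_{ii} X_{jj}}, \quad \langle B, X \rangle = \sum_i b_{ii} X_{ii} + \sum_{i \ne j} b_{ij} X_{ij},\]
the diagonal parts coincide, while $X \succeq 0$ yields $X_{ij} \le |X_{ij}| \le \sqrt{X_{ii} X_{jj}}$ for $i \ne j$; multiplying by the nonpositive $b_{ij}$ flips the inequality, giving $b_{ij} X_{ij} \ge b_{ij} \sqrt{X_{ii} X_{jj}}$ termwise. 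Summing over $i \ne j$ produces $x^T B x \le \langle B, X \rangle$, and hence the primal objective at $x$ is at most the SDP objective at $X$, delivering $v\cref{equ:different} \le v\cref{equ:sdp1}$.

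Combining the two bounds gives equality of the optimal values, and applying the same construction to an optimal $X$ of \cref{equ:sdp1} produces an $x$ that attains this common value and is therefore optimal for \cref{equ:different}, which is the second claim. I do not anticipate a genuine obstacle here; the main care is simply to apply the PSD inequality with the correct orientation relative to the sign of $b_{ij}$. Notably, neither irreducibility nor the full $M$-matrix structure of \cref{ass:structure} is needed — only the nonpositivity of the off-diagonals of $B$ is used, exactly matching the hypothesis of the lemma.
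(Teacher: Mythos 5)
Your proposal is correct and follows essentially the same route as the paper: lift a feasible $x$ to $xx^T$ for one inequality, and for the other take $x_i=\sqrt{X_{ii}}$ and use the $2\times2$ PSD minor bound $X_{ij}\le\sqrt{X_{ii}X_{jj}}$ together with $b_{ij}\le 0$ to flip the inequality termwise. You merely make explicit the PSD minor inequality that the paper leaves implicit in its displayed chain.
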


Furthermore, we can obtain the uniqueness of the nonnegative optimizer of \cref{equ:different}.
\begin{theorem}
The nonnegative optimizer of \cref{equ:different} is unique.
\end{theorem}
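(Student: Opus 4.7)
My plan is to chain together parts 2 and 3 of \cref{lem:eigen} to get uniqueness almost immediately, once it is recognized that a nonnegative global minimizer must be a nonnegative stationary point and therefore a nonnegative eigenvector of the NEPv.

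First I would observe that any global minimizer $x^\star \ge 0$ of \cref{equ:different} satisfies the first-order necessary conditions, since LICQ holds on the sphere $\|x\|=1$ as already noted in \cref{sec:nepv}. Hence $x^\star$ solves the NEPv \cref{equ:kkt} with some multiplier $\lambda^\star$, i.e.\ $x^\star$ is a nonnegative eigenvector. By part 2 of \cref{lem:eigen}, every nonnegative eigenvector is in fact strictly positive, so $x^\star>0$ and the corresponding eigenvalue is the unique eigenvalue admitting a nonnegative eigenvector, namely $\lambda_0$.

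Next I would invoke part 3 of \cref{lem:eigen}: $\lambda_0$ is real geometrically simple, so any two eigenvectors associated with $\lambda_0$ are scalar multiples of each other. If $x^\star$ and $y^\star$ are both nonnegative global minimizers, then both are positive eigenvectors for $\lambda_0$, so $y^\star = c\,x^\star$ for some real $c$. Positivity of both forces $c>0$, and the normalization $\|x^\star\|=\|y^\star\|=1$ forces $c=1$, giving $x^\star=y^\star$.

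I do not anticipate a serious obstacle here; the theorem is essentially a corollary of the lemma, and the only thing to be careful about is making the logical order explicit: global minimizer $\Rightarrow$ stationary point $\Rightarrow$ nonnegative eigenvector $\Rightarrow$ positive eigenvector for $\lambda_0$ $\Rightarrow$ unique by real geometric simplicity together with $\|x\|=1$. One might alternatively try to derive uniqueness from the SDP tightness in \cref{lem:tight} by arguing uniqueness of the SDP minimizer, but that route is strictly harder and offers no benefit, so I would not pursue it.
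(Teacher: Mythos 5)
Your argument is correct, but it is not the route the paper takes. The paper proves this theorem inside the SDP remark: given a nonnegative optimizer $x^*$, \cref{lem:tight} makes $X=x^*(x^*)^T$ an optimizer of the relaxation \cref{equ:sdp1}, and the strong convexity of the quartic term $\frac{\alpha}{2}\sum_i X_{ii}^2$ pins down the diagonal of the optimal $X$, hence $x^*=\sqrt{\diag(X)}$ --- the exact alternative you mention and set aside. Your chain (global minimizer $\Rightarrow$ stationary point via LICQ $\Rightarrow$ nonnegative eigenvector $\Rightarrow$ strictly positive eigenvector of $\lambda_0$ by part 2 of \cref{lem:eigen} $\Rightarrow$ unique up to the positive unit normalization by real geometric simplicity in part 3) is sound and, as you say, makes the theorem a corollary of \cref{lem:eigen}; it has the added benefit of showing the nonnegative optimizer is strictly positive. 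The trade-off is that your route leans on the full strength of \cref{ass:structure} (irreducibility is what drives positivity and geometric simplicity), whereas the paper's SDP argument needs only nonpositive off-diagonal entries of $B$ and the convexity of \cref{equ:sdp1}, and is essentially one line once \cref{lem:tight} is in hand. Note also that the paper elsewhere gives yet a third derivation of the same uniqueness via the convex reformulation \cref{equ:reform} (\cref{lem:equal} and \cref{lem:convex}), which is closer in spirit to the SDP proof than to yours.
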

\begin{proof}
Let $x^*$ be a nonnegative optimizer of \cref{equ:different}. According to \cref{lem:tight}, $X=x^*(x^*)^T$ is a optimizer of \cref{equ:sdp1}. Since  \cref{equ:sdp1} is a convex optimization problem and the objective function is strongly convex, the optimizer is unique. Thus $x^*$ is unique.
\end{proof}
\end{remark}
\begin{remark}
Motivated by \cite{cances2010numerical}, we can also obtain the uniqueness of the nonnegative eigenvector based on \cref{lem:equal} and \cref{lem:convex}.
\begin{lemma}\label{lem:equal}
\cref{equ:different} has the same optimal value as the following problem. Moreover, $x$ is a nonnegative eigenvector of \cref{equ:kkt} if and only if that $y$, given by $y_i=x_i^2~(\forall i\in[n])$, is a positive stationary point of \cref{equ:reform} as follows.
\begin{equation}\label{equ:reform}
\left\{
\begin{array}{lrc}
\underset{y\in \mathbb{R}^n}{\min} \quad f(y)=\frac{\alpha}{2}\sum\limits_{i=1}^{n} y_i^2+\sqrt{y}^{T}B\sqrt{y}\\
{\rm s.t.}\quad \sum\limits_{i=1}^ny_i=1,~y\ge 0.
\end{array}
\right.
\end{equation}
\end{lemma}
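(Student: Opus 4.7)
The plan is to establish the two assertions separately by means of the substitution $y_i = x_i^2$, which parametrizes the nonnegative part of the sphere by the standard simplex.

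First, for the equality of optimal values: the map $x\mapsto(x_1^2,\ldots,x_n^2)^T$ is a bijection between $\{x\ge 0:\|x\|^2=1\}$ and $\{y\ge 0:\sum_i y_i=1\}$, with inverse $y\mapsto\sqrt y$. A direct substitution shows
\[
\tfrac{\alpha}{2}\sum_i x_i^4 + x^{T}Bx \;=\; \tfrac{\alpha}{2}\sum_i y_i^2 + \sqrt y^{T}B\sqrt y \;=\; f(y),
\]
so corresponding feasible points have equal objective value. Combining this with \cref{lem:eigen}, which guarantees that \cref{equ:different} admits a nonnegative minimizer, the optimal value of \cref{equ:reform} is at most that of \cref{equ:different}; the reverse inequality is immediate since every feasible $y$ of \cref{equ:reform} yields the feasible $x=\sqrt y\ge 0$ of \cref{equ:different} with the same value.

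Second, for the stationary-point characterization: at a positive $y$ the inequality constraints $y\ge 0$ are inactive, so the first-order KKT condition for \cref{equ:reform} reduces to $\nabla f(y)=\mu\mathbf 1$ for some Lagrange multiplier $\mu$. A chain-rule computation, using the symmetry of $B$, gives
\[
\nabla f(y)_i \;=\; \alpha y_i + \frac{(B\sqrt y)_i}{\sqrt{y_i}}.
\]
Multiplying the $i$-th equation by $\sqrt{y_i}$ and setting $x_i=\sqrt{y_i}$ recasts stationarity as $\alpha x_i^3 + (Bx)_i = \mu x_i$, which together with $\sum_i y_i = \|x\|^2 = 1$ is exactly \cref{equ:kkt} with $\lambda=\mu$. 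This immediately gives the ``if'' direction. For ``only if'', \cref{lem:eigen} tells us that any nonnegative eigenvector $x$ of \cref{equ:kkt} is in fact strictly positive, so $y_i=x_i^2>0$ for every $i$; reading the same chain of equalities backward shows that $y$ satisfies the stationarity condition for \cref{equ:reform}.

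The only real subtlety is the $1/\sqrt{y_i}$ factor in $\nabla f$, which is well defined only on the positive orthant. This is precisely why the lemma requires \emph{positive} stationary points, and why the strict-positivity conclusion of \cref{lem:eigen} is essential for the ``only if'' direction; no further boundary or active-set analysis is needed.
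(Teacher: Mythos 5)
Your proposal is correct and follows essentially the same route as the paper: the substitution $y_i=x_i^2$ identifying the nonnegative sphere with the simplex, the existence of a nonnegative minimizer from \cref{lem:eigen} to transfer optimal values, and a direct comparison of the KKT/stationarity conditions of \cref{equ:reform} at a positive point with \cref{equ:kkt}. Your write-up is slightly more explicit than the paper's (which simply lists the KKT system and says ``compare with \cref{equ:kkt}''), but there is no substantive difference in the argument.
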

\begin{proof}
If $x\ge 0$ is a feasible point of \cref{equ:different}, $y$ with $y_i=x_i^2,~\forall i\in[n]$, is a feasible point of \cref{equ:reform}, and the converse is also true. Thus, \cref{equ:different} and \cref{equ:reform} have the same optimal value.

According to \cref{lem:eigen}, the nonnegative eigenvector has no zero entries. The stationary point of \cref{equ:reform} is defined by the following conditions. It is well defined when $y>0$.
\begin{equation*}
\begin{aligned}
\alpha y_i+\sum\limits_{j=1}^nb_{ij}\frac{\sqrt{y_j}}{\sqrt{y_i}}-\lambda-\mu_i=0,&~\forall i\in[n];\\
\sum\limits_{i=1}^ny_i=1;&\\
\mu_i\ge 0,~\mu_iy_i=0,&~\forall i\in[n].
\end{aligned}
\end{equation*}
Compare it with \cref{equ:kkt}, the desired result follows.
\end{proof}
\begin{lemma}\label{lem:convex}
The objective function $f(y)$ of \cref{equ:reform} is strictly convex over the convex set $\mathcal{S}=\{y\ge0|\sum_{i=1}y_i=1,~y\in \mathbb{R}^n\}$. Thus \cref{equ:reform} has unique positive stationary point, which is exactly the optimizer.
\end{lemma}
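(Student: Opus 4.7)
The plan is to establish strict convexity of $f$ on $\mathcal{S}$ by decomposing it into three pieces whose convexity is easy to read off, and then extract the uniqueness statement from standard convex programming theory combined with \cref{lem:equal} and \cref{lem:eigen}.

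Write $f(y) = \frac{\alpha}{2}\sum_{i=1}^{n} y_i^2 + \sum_{i=1}^{n} b_{ii} y_i + \sum_{i\neq j} b_{ij}\sqrt{y_i}\sqrt{y_j}$. The first summand is a positive-definite quadratic form in $y$, hence strictly convex on $\mathbb{R}^n$. The second is linear in $y$ and contributes neither convexity nor concavity. For the cross terms, the key observation is that $(u,v)\mapsto \sqrt{uv}$ is (jointly) concave on $\mathbb{R}_+^2$, a fact that can be verified either by computing its Hessian $-\tfrac{1}{4}(uv)^{-3/2}\bigl(\begin{smallmatrix}v^2 & -uv\\ -uv & u^2\end{smallmatrix}\bigr)\preceq 0$ or by recognizing it as a two-dimensional geometric mean. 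Because $b_{ij}\le 0$ for $i\neq j$ under \cref{ass:structure}, each term $b_{ij}\sqrt{y_i}\sqrt{y_j}=-|b_{ij}|\sqrt{y_iy_j}$ is convex on $\mathbb{R}_+^n$. Summing a strictly convex function with convex ones yields strict convexity of $f$ on $\mathcal{S}$.

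Once strict convexity is in hand, the remainder is routine. The set $\mathcal{S}$ is a (convex) probability simplex, so \cref{equ:reform} is a strictly convex minimization over a convex set; the minimizer exists (by compactness of $\mathcal{S}$ and continuity of $f$) and is unique. Since the constraints are all linear, the LICQ trivially holds, and any stationary point of \cref{equ:reform} is a KKT point; by convexity, every KKT point is a global minimizer, so there is at most one stationary point in $\mathcal{S}$. Existence of a \emph{positive} stationary point follows from \cref{lem:eigen} combined with the correspondence in \cref{lem:equal}: the positive eigenvector $x$ of \cref{equ:kkt} produces $y_i=x_i^2>0$ which satisfies the stationary conditions. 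Uniqueness then forces this $y$ to be the unique optimizer of \cref{equ:reform}.

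The only step that requires any real work is verifying that the off-diagonal piece $\sum_{i\neq j} b_{ij}\sqrt{y_iy_j}$ is convex, and this reduces entirely to the sign condition $b_{ij}\le 0$ (which is built into the $M$-matrix assumption) together with the concavity of the pairwise geometric mean. Once this is noted, the $M$-matrix structure does all the heavy lifting, and no further analysis of the Laplacian or irreducibility is needed for this lemma.
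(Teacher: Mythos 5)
Your proof is correct, and it takes a genuinely different route from the paper's. The paper establishes strict convexity by brute force: assuming $y>0$, it writes down the Hessian of $f$ and, after completing a square over the off-diagonal pairs, obtains $z^T\nabla^2 f(y)z \ge \alpha\|z\|^2>0$, using $b_{ij}\le 0$ for $i\neq j$. You instead decompose $f$ into the strictly convex quadratic $\frac{\alpha}{2}\|y\|^2$, the linear term $\sum_i b_{ii}y_i$, and the cross terms $\sum_{i\neq j}b_{ij}\sqrt{y_iy_j}$, each of which is convex as a nonnegative multiple $-b_{ij}\ge 0$ of $-\sqrt{y_iy_j}$, by concavity of the pairwise geometric mean. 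Both arguments pivot on exactly the same structural fact (the sign condition on the off-diagonal entries of the $M$-matrix), and your rank-one Hessian $-\frac{1}{4}(uv)^{-3/2}\bigl(\begin{smallmatrix}v^2&-uv\\-uv&u^2\end{smallmatrix}\bigr)$ is precisely the two-variable shadow of the paper's square-completion. What your version buys is validity on the whole closed simplex, including boundary points where $f$ is not twice (or even once) differentiable and the paper's Hessian computation does not literally apply, plus freedom from second-derivative bookkeeping. Your treatment of the second claim is also more explicit than the paper's: the paper stops at positive definiteness of the Hessian and simply asserts the conclusion, whereas you spell out existence and uniqueness of the minimizer by compactness and strict convexity, the fact that any KKT point of a convex program is a global minimizer, and the existence of a positive stationary point via the correspondence in \cref{lem:equal} with the positive eigenvector guaranteed by \cref{lem:eigen}.
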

\begin{proof}
Assume $y>0$, we have
\[(\nabla f(y))_i=\alpha y_i+\sum\limits_{j=1}^nb_{ij}\frac{\sqrt{y_j}}{\sqrt{y_i}}.\]
Then we have
\begin{equation*}
(\nabla^2 f(y))_{ij}=
\left\{
\begin{aligned}
\alpha-\sum\limits_{j\neq i}\frac{1}{2}\frac{b_{ij}\sqrt{y_j}}{y_i\sqrt{y_i}},&~j=i\\
\frac{1}{2}\frac{b_{ij}}{\sqrt{y_iy_j}},&~j=i.
\end{aligned}
\right.
\end{equation*}
Thus, for any $z\in \mathbb{R}^n$,
\begin{equation*}
\begin{aligned}
z^T\nabla^2f(y)z&=\sum\limits_{i=1}^n\alpha z_i^2+\sum\limits_{i=1}^n\left(\sum\limits_{j\neq i} \frac{1}{2}\frac{b_{ij}}{\sqrt{y_iy_j}}z_iz_j-\frac{1}{2}\frac{b_{ij}\sqrt{y_j}}{y_i\sqrt{y_i}}z_i^2\right)\\
&=\sum\limits_{i=1}^n\alpha z_i^2 -\sum\limits_{i=1}^n\sum\limits_{j=1}^n\frac{b_{ij}}{4}\left[\left(\frac{\sqrt{y_j}}{y_i\sqrt{y_i}}\right)^{\frac{1}{2}}z_i-\left(\frac{\sqrt{y_i}}{y_j\sqrt{y_j}}\right)^{\frac{1}{2}}z_j\right]^2\\
(b_{ij}\le 0,~i\neq j)&\ge\sum\limits_{i=1}^n\alpha z_i^2\ge 0.
\end{aligned}
\end{equation*}
The Hessian matrix of $f(y)$ is positive definite. Thus $f(y)$ is strictly convex over $\mathcal{S}$.
\end{proof}

Combine \cref{lem:equal} and \cref{lem:convex}, we conclude that \cref{equ:kkt} has a unique nonnegative eigenvector, and \cref{equ:different} has a unique nonnegative optimizer.
\end{remark}

Now, we can conclude that the nonpositive (or nonnegative) eigenvector corresponding to the smallest eigenvalue of the NEPv \cref{equ:kkt}, is exactly the global optimum of \cref{equ:different}.
\begin{theorem}\label{thm:global}
 Under \cref{ass:structure}, the minimization problem \cref{equ:different} obtains its global minimum if and only if the stationary point is the nonnegative (or nonpositive) eigenvector of \cref{equ:kkt}. Furthermore, when imposing the nonnegative constraint, the nonnegative eigenvector is the only global optimum.
\end{theorem}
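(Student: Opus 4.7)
The plan is to exploit the sign-reduction inequality \cref{equ:proof_nonnegative} established inside the proof of \cref{lem:eigen}, together with the uniqueness and positivity of the nonnegative eigenvector given by \cref{lem:eigen}. Writing $x_+$ for this (positive) eigenvector and noting that the objective in \cref{equ:different} is invariant under $x\mapsto -x$, the task reduces to showing that the only stationary points attaining the infimum are $\pm x_+$. This is strong enough to yield the ``if and only if'' characterization and the final uniqueness claim simultaneously.

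For the ``only if'' direction, I would first pick a global minimizer $x^*$, which exists by continuity on the compact sphere, and apply \cref{equ:proof_nonnegative} to conclude that $|x^*|$ is feasible and has objective value no larger than that of $x^*$; hence $|x^*|$ is itself a global minimizer. Being a minimizer, $|x^*|$ is a stationary point and so is a nonnegative eigenvector of \cref{equ:kkt}; by \cref{lem:eigen} it must equal $x_+$ and in particular be strictly positive. Because $x^*$ and $|x^*|$ then achieve the same objective, equality must hold in \cref{equ:proof_nonnegative}, which on inspection reduces to $b_{ij}x^*_i x^*_j = b_{ij}|x^*_i||x^*_j|$ for all $i\neq j$. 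Since the off-diagonal $b_{ij}\leq 0$ and no $x^*_i$ vanishes, this forces $x^*_i$ and $x^*_j$ to share a common sign whenever $b_{ij}\neq 0$. Viewing this as a sign-agreement relation on the graph with edge set $\{(i,j):\, i\neq j,\, b_{ij}\neq 0\}$, the irreducibility in \cref{ass:structure} makes this graph connected, so one common sign propagates to every coordinate and $x^*\in\{x_+,-x_+\}$.

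The ``if'' direction is then automatic: a global minimizer exists and, by the above, must lie in $\{x_+,-x_+\}$; since these two points share the same objective value via the even symmetry of the cost in $x$, both of them are global minima, and each is a stationary point, i.e.\ an eigenvector of \cref{equ:kkt}. The final uniqueness of the nonnegative global optimum follows at once, because $x_+$ is the only nonnegative candidate. The main obstacle is the sign-propagation step: without carefully combining the strict positivity $|x^*|=x_+>0$ with the graph-connectedness reading of irreducibility, one cannot exclude minimizers whose coordinates have mixed signs; both ingredients are essential and must be used together.
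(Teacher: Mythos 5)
Your proof is correct, and its skeleton matches the paper's: both pass from a global minimizer $x^*$ to $|x^*|$ via \cref{equ:proof_nonnegative}, identify $|x^*|$ with the unique positive eigenvector using \cref{lem:eigen}, and get the ``if'' direction from the uniqueness of the nonnegative eigenpair. Where you genuinely diverge is in concluding $x^*=\pm|x^*|$. The paper notes that $x^*$ and $|x^*|$ are eigenvectors for the same eigenvalue $\lambda_0$ (the multipliers agree because the objective values and the quartic terms agree) and then invokes the real geometric simplicity of $\lambda_0$ from part 3 of \cref{lem:eigen}. You instead analyze the equality case of \cref{equ:proof_nonnegative} term by term: each off-diagonal term obeys $b_{ij}x^*_ix^*_j\ge b_{ij}|x^*_i|\,|x^*_j|$, so equality of the sums forces $x^*_ix^*_j=|x^*_i|\,|x^*_j|$ whenever $b_{ij}\neq 0$; since $|x^*|=x_+>0$ has no zero entry, signs agree across every edge of the graph of $B$, and irreducibility (connectedness, for symmetric $B$) propagates one sign to all coordinates. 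Your route is more elementary and self-contained at that step --- it needs only parts 1 and 2 of \cref{lem:eigen}, not the geometric simplicity claim --- at the cost of rerunning, on the objective function, essentially the same irreducibility-driven rigidity argument that the paper packaged once inside the lemma. Both arguments are sound, and your final uniqueness claim follows correctly since $x_+$ is the only nonnegative point of $\{x_+,-x_+\}$.
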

\begin{proof}
The sufficiency is obvious. According to \cref{equ:proof_nonnegative}, there is always a nonnegative global optimum, which thus satisfies \cref{equ:kkt}. On the other hand, \cref{lem:eigen} says that the eigenpair $(\lambda,~x)$ with $x\ge 0$ is unique. Thus, if the stationary point $x$ is nonnegative (or nonpositive), it must be a global minimum.

Suppose $x^*$ is a global minimum, then according to \cref{equ:proof_nonnegative}, $|x^*|$ is also a global minimum. So there exist $\lambda$ and $\mu$, such that $(\lambda, x^*)$ and $(\mu, |x^*|)$ are corresponding eigenpairs, respectively. Then, we have
\begin{equation*}
\frac{\alpha}{2}\mathcal{A}(x^*)^4+(x^*)^TBx^*=
\frac{\alpha}{2}\mathcal{A}(|x^*|)^4+(|x^*|)^TB|x^*|,
\end{equation*}
and
\begin{equation*}
\begin{aligned}
\lambda &= \alpha\mathcal{A}(x^*)^4+(x^*)^TBx^*;\\
\mu &= \alpha\mathcal{A}(|x^*|)^4+(|x^*|)^TB|x^*|.
\end{aligned}
\end{equation*}
Thus, $\lambda=\mu$. According to \cref{lem:eigen}, $\lambda$ is real geometrically simple. That is, $x^*=|x^*| (or -|x^*|)$. The global optimum is nonnegative (or nonpositive).
\end{proof}
\begin{remark}
We can also prove that $H=\mathcal{A}x^2+B-\lambda_0 I\succeq 0$ as the proof of \cref{equ:proof2}, where $x$ is the corresponding eigenvector of $\lambda_0$. Then according to Theorem 2.1 in Zhang et al. \cite{zhang2019geometric}, we can directly come to the conclusion that $x$ is a global optimum and any other global minimum of \cref{equ:different} must belongs to the equivalence class $[\![x]\!]=\{y\in \mathbb{R}^n:|y_k|=|x_k|,~\forall k\in[n]\}$. Here, we obtain further that $y=x$ or $y=-x$.
\end{remark}
\begin{remark}
Before we move to the next section for the algorithms for solving the optimization problem in question, we note that Self-Consistent Field (SCF) is a widely used algorithm to solve the NEPv. And according to Cai et al. \cite[Theorem 3.1, Theorem 4.2]{cai2018eigenvector}, we can derive a rough sufficient condition for NEPv \cref{equ:kkt} to have a unique eigenvector $x^*$ corresponding to smallest eigenvalue and SCF to converge globally to $x^*$. That is, $0<\alpha<\frac{\lambda_{2}(B)-\lambda_{min}(B)}{3}$, where $\lambda_2(B)$ is the second smallest eigenvalue of $B$. Here, we only need $\alpha>0$ for NEPv with special $B$ to have a unique (up to a scalar) eigenvector corresponding to the smallest eigenvalue.
\end{remark}

\section{Convergence of algorithms to the global optimum}
\label{sec:algorithms}
Based on \cref{thm:global}, we derive that for any algorithm that can find a stationary point for \cref{equ:different}, we may check the global optimality by its sign. And if we impose the nonnegativity, those algorithms actually can find the global minimum.
\subsection{The regularized Newton method}\label{subsec:RN}
As an example, we briefly explain how to obtain the convergence to a positive global optimum for the regularized Newton (RN) method proposed by Wu et al. \cite{wu2017regularized}. Since the changes in the algorithm and the convergence proof are too trivial and the complete convergence analysis is complicated, we will not repeat them in detail and only point out the difference.

For the algorithm, we can achieve the nonnegativity by simply taking the absolute value of $X$ in the updating of $X^{k+1}$. For the convergence proof, we only make a tiny modification to it using notations that are consistent with \cite[Theorem 4.9]{wen2013adaptive}. That is, taking the absolute value of $Z_k$ in the updation of $X_{k+1}$, then $E(|Z_k|)\le E(Z_k)$. Thus we have $$E(X_k)-E(|Z_k|)\ge E(X_k)-E(Z_k)\ge\eta_1\cdot (-m_k(Z_k)),$$
where $E(X)$ is the total energy function (in the context, it is the objective function $\frac{\alpha}{2}\mathcal{A}x^4+x^{T}Bx$), $m_k(X)$ is an approximate Taylor expansion to $E(X)$, and $Z_k$ is the trial point computed from ${\min_{\|X\|_2=1}}m_k(X)$. The convergence to the stationary point is not affected according to \cite{wen2013adaptive}. Thus, the RN algorithm can converge to a global optimum for \cref{equ:different}.

Actually, Wu et al. \cite{wu2017regularized} showed numerically that the RN method without nonnegative restriction, can be applied to compute the asymmetric excited states provided that the initial data is chosen as an asymmetric function. It implies that the RN method may find a stationary point other than the optimum. With the simple absolute value operation, the RN method will always obtain the ground state whatever initial point it started with. We will give an example in the numerical section.

\subsection{Alternating direction method of multipliers}\label{subsec:ADMM}
The challenge to solve the spherical constraint problem comes from the nonlinear and nonconvex constraint. Penalty methods can be used to avoid handling the spherical constraint directly \cite{nocedal2006numerical}, while it usually suffers from slow convergence. Compared with the orthogonality preserving algorithm like mentioned above in \cref{subsec:RN}, alternating direction method of multipliers (ADMM) can be coded easily, and for the spherical constraint, its subproblem in the algorithm can be solved analytically. Osher et al. \cite{lai2014splitting} has already shown the efficiency of splitting method on the orthogonality constraint problem through numerical results without the theoretical guarantee. Taking the advantage of the structure of the problem considered in this paper, we obtain the convergence of ADMM without involving sophisticated manifold theories.

First, we rewrite \cref{equ:different} imposing the nonnegative constraint into the standard ADMM problem as follows:
\begin{equation}\label{equ:admm}
\left\{
\begin{array}{lrc}
\underset{x\in \mathbb{R}^n}{\min} \quad I_s(x)+f(y)\\
{\rm s.t.}\quad x=y,
\end{array}
\right.
\end{equation}
where $\mathcal{S} = \{x|\|x\|=1, x\ge 0\}$, $f(y)=\frac{\alpha}{2}\mathcal{A}y^4+y^TBy$. The augmented lagrangian of \cref{equ:admm} is,
\begin{equation}\label{equ:lagrangian}
\mathcal{L}_{\rho}(x,~y,~w)=I_s(x)+f(y)+w^T(x-y)+\frac{\rho}{2}\|x-y\|_2^2,
\end{equation}
for which, the iteration steps are:
\begin{equation}
\begin{array}{lrc}\label{equ:iter2}
x^{k+1}:=Proj_\mathcal{S}(y^{k}-\frac{w^{k}}{\rho});\\
y^{k+1}:=\underset{y}{\rm{arg}\min}(f(y)+w^{k^T}(x^{k+1}-y)+\frac{\rho}{2}\|x^{k+1}-y\|_2^2);\\
w^{k+1}:=w^{k}+\rho(x^{k+1}-y^{k+1}).
\end{array}
\end{equation}

In the rest part of this subsection, we give the convergence analysis of the standard ADMM for the special spherical constraint optimization problem considered here. Our analysis is based on the work of Wang et al. \cite{wang2019global}, which requires $f(y)$ in \cref{equ:admm} to be Lipschitz differentiable with constant $L_f$. However the Lipschitz condition is not satisfied in our problem, since $f(y)$ is a quartic function. The following lemma proves that the sequence $\{y_k\}$ generated by \cref{equ:iter2} is bounded without the need for $f(y)$ to be Lipschitz differentiable; thus $\nabla f(y)$ only need to have Lipschitz constant locally.
\begin{lemma}\label{lem:bound}
For any given initial point $y^0$ and $w^0$, if $\rho$ is sufficiently large, the sequence $\{(x^k,~y^k,~w^k)\}$ is bounded. In particular, for any $D>1$, if
\begin{equation}\label{equ:rhobound}
\rho\ge \max\{\frac{\|w^0\|-2D\lambda_{min}(B)}{D-1},~\frac{2D^3\alpha+2D(\lambda_{max}(B)-\lambda_{min}(B))}{D-1}\},
\end{equation}
 $\|y^k\|\le D~(\forall ~k\ge 1)$.
\end{lemma}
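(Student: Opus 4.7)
The plan is to exploit the identity $w^{k+1}=\nabla f(y^{k+1})$ that is forced by the ADMM updates. Since the $y$-subproblem in \cref{equ:iter2} is an unconstrained smooth minimization, its first-order optimality condition reads
\[
\nabla f(y^{k+1}) - w^k + \rho(y^{k+1}-x^{k+1}) = 0,
\]
and combining it with the dual update $w^{k+1}=w^k+\rho(x^{k+1}-y^{k+1})$ yields $w^{k+1}=\nabla f(y^{k+1}) = 2\alpha\mathcal{A}(y^{k+1})^3 + 2B y^{k+1}$ for every $k\ge 0$. This identity is the lever that lets the proof avoid a global Lipschitz hypothesis on $\nabla f$ and instead control $\|w^k\|$ by a polynomial function of $\|y^k\|$.

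Next, I would take the inner product of the same optimality condition with $y^{k+1}$. Using $B\succeq \lambda_{\min}(B)I$ (nonnegative by \cref{ass:structure}), discarding the nonnegative term $2\alpha\|y^{k+1}\|_4^4$, and applying Cauchy--Schwarz together with $\|x^{k+1}\|=1$ on the right-hand side, I obtain the per-step inequality
\[
\bigl(\rho + 2\lambda_{\min}(B)\bigr)\,\|y^{k+1}\|^2 \;\le\; \bigl(\rho + \|w^k\|\bigr)\,\|y^{k+1}\|,
\]
so that $\|y^{k+1}\|\le (\rho+\|w^k\|)/(\rho+2\lambda_{\min}(B))$ (the case $y^{k+1}=0$ being trivial).

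The bound $\|y^k\|\le D$ now follows by induction. The base case $k=1$ uses this inequality with the given $w^0$: rearranging $(\rho+\|w^0\|)/(\rho+2\lambda_{\min}(B))\le D$ produces the first term of the max in \cref{equ:rhobound}. For $k\ge 2$, the identity above gives $w^{k-1}=\nabla f(y^{k-1})$, and the inductive hypothesis $\|y^{k-1}\|\le D$ combined with $\|\mathcal{A}(y^{k-1})^3\|\le\|y^{k-1}\|^3$ and $\|By^{k-1}\|\le\lambda_{\max}(B)\|y^{k-1}\|$ gives $\|w^{k-1}\|\le 2\alpha D^3+2\lambda_{\max}(B)D$. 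Substituting into the per-step inequality and requiring $\|y^k\|\le D$ yields the second term of the max, where the $-2D\lambda_{\min}(B)$ in the first bound and the $\lambda_{\max}(B)-\lambda_{\min}(B)$ in the second both come from the denominator $\rho+2\lambda_{\min}(B)$. Boundedness of $\{x^k\}$ is automatic from $x^k\in\mathcal S$, and boundedness of $\{w^k\}$ then follows from $w^k=\nabla f(y^k)$ for $k\ge 1$.

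The principal obstacle is precisely the non-Lipschitzness of $\nabla f$: a direct invocation of the framework in \cite{wang2019global} would demand a global $L_f$, which the quartic term $\frac{\alpha}{2}\mathcal{A}y^4$ makes impossible. The identity $w^{k+1}=\nabla f(y^{k+1})$ supplies a local Lipschitz surrogate, at the price of the cubic scaling $\rho\sim D^3\alpha$ appearing in \cref{equ:rhobound}. A small care point is to leave the discarded $2\alpha\|y^{k+1}\|_4^4$ as nonnegative slack rather than attempting to absorb it (which would tighten constants but obscure the stated bound), and to remember that the per-step inequality relies on $\|x^{k+1}\|=1$, which is guaranteed by the projection onto $\mathcal{S}$.
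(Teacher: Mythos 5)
Your proposal is correct and follows essentially the same route as the paper: the same identities $w^k=\nabla f(y^k)$ extracted from the ADMM optimality conditions, the same induction on $\|y^k\|\le D$, and the same constants; the only cosmetic difference is that you derive the per-step bound by pairing the optimality condition with $y^{k+1}$ and applying Cauchy--Schwarz, whereas the paper writes $y^{k+1}=(2\alpha\mathcal{A}(y^{k+1})^2+2B+\rho I)^{-1}(\cdot)$ and bounds the norm of the inverse --- the two estimates are interchangeable.
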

\begin{proof}
According to \cref{equ:iter2}, we obtain that
\begin{equation*}
\begin{aligned}
\|x^{k+1}\|&=1;\\
\nabla f(y^{k+1})-w^k-\rho(x^{k+1}-y^{k+1})&=0;\\
w^{k+1}&=w^k+\rho(x^{k+1}-y^{k+1}).
\end{aligned}
\end{equation*}
Hence, we have
\begin{equation}\label{equ:w}
\begin{aligned}
\nabla f(y^1)+\rho y^1&=w^0+\rho x^1;\\
\nabla f(y^k)&=w^k,~\forall k\ge 1;\\
\nabla f(y^{k+1})+\rho y^{k+1}&=\nabla f(y^k)+\rho x^{k+1},~\forall k\ge1.
\end{aligned}
\end{equation}
That is,
\begin{align}
2\alpha\mathcal{A}(y^1)^3+2By^1+\rho y^1&=w^0+\rho x^1;\label{equ:y1}\\
2\alpha\mathcal{A}(y^{k+1})^3+2By^{k+1}+\rho y^{k+1}&=2\alpha\mathcal{A}(y^{k})^3+2By^{k}+\rho x^{k+1},~\forall k\ge 1.\label{equ:yk}
\end{align}
Since $\alpha>0$, $B\succeq 0$, $2\alpha\mathcal{A}(y^{k+1})^2+2B+\rho I$ is positive definite for any $\rho>0$. For some $D>1$, we now give a bound for $\rho$, such that $\|y^k\|\le D$, for any $k\ge 1$.

First, according to \cref{equ:y1}, we have
\begin{equation*}
\begin{aligned}
\|y^1\|&=\|(2\alpha\mathcal{A}(y^1)^2+2B+\rho I)^{-1}(w^0+\rho x^1)\|\\
&\le\frac{\|w^0\|+\rho}{2\lambda_{min}(B)+\rho}.
\end{aligned}
\end{equation*}
It leads to that $\|y^1\|\le D$ when $\rho\ge\frac{\|w^0\|-2D\lambda_{min}(B)}{D-1}$.

Suppose $\|y^k\|\le D$, according to \cref{equ:yk}, we can induce the bound for $y^{k+1}$ similarly as follows,
\begin{equation*}
\begin{aligned}
\|y^{k+1}\|&=\|(2\alpha\mathcal{A}(y^{k+1})^2+2B+\rho I)^{-1}(2\alpha\mathcal{A}(y^{k})^3+2By^k+\rho x^{k+1})\|\\
&\le\frac{2\alpha\|y^k\|^3+2\lambda_{max}(B)\|y^k\|+\rho}{2\lambda_{min}(B)+\rho}\\
&\le\frac{2 D^3\alpha+2D\lambda_{max}(B)+\rho}{2\lambda_{min}(B)+\rho}.
\end{aligned}
\end{equation*}
Thus, $\|y^{k+1}\|\le D$ if $\rho\ge\frac{2D^3\alpha+2D(\lambda_{max}(B)-\lambda_{min}(B))}{D-1}$, for some $D>1$.
\end{proof}

For completeness, we recall and summarize some results of Wang et al.\cite{wang2019global} as the following lemma. And we also give a detailed proof of those results for our problem, based on the boundedness of $\{(x^k,~y^k,~w^k)\}$.
\begin{lemma}\label{lem:sum}
If $\rho$ satisfies \cref{equ:rhobound} and $\rho\ge2(L_f+1)$, where $L_f$ is the locally Lipschitz constant for $\nabla f(y)$ when $\|y\|\le D$ for some $D>1$, then for any $k\ge1$, it holds that
\begin{equation}\label{equ:subbound}
\|x^{k+1}-y^{k+1}\|\le \frac{L_f}{\rho}\|y^{k+1}-y^k\|,
\end{equation}
and
\begin{equation}\label{equ:decrease}
\mathcal{L}_{\rho}(x^k,~y^k,~w^k)-\mathcal{L}_{\rho}(x^{k+1},~y^{k+1},~w^{k+1})\ge \|y^{k+1}-y^{k}\|^2.
\end{equation}
\end{lemma}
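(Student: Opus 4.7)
The plan is to establish \cref{equ:subbound} first, since it will be reused in the estimate for \cref{equ:decrease}, and then to decompose the one-step change of $\mathcal{L}_\rho$ into contributions of the three ADMM updates and control each piece separately.

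For \cref{equ:subbound} I would combine the optimality condition of the $y$-subproblem in \cref{equ:iter2} with the dual update. Differentiating the $y$-subproblem gives $\nabla f(y^{k+1}) = w^k + \rho(x^{k+1} - y^{k+1})$, and the dual step rewrites the right-hand side as $w^{k+1}$. Hence $w^{k+1} = \nabla f(y^{k+1})$, which is the second identity in \cref{equ:w}; consequently $\rho(x^{k+1} - y^{k+1}) = w^{k+1} - w^k = \nabla f(y^{k+1}) - \nabla f(y^k)$. Since by \cref{lem:bound} the iterates $y^k$ and $y^{k+1}$ lie in the convex ball $\{y : \|y\| \le D\}$ on which $\nabla f$ is $L_f$-Lipschitz, applying Lipschitz continuity to the right-hand side yields \cref{equ:subbound}.

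For \cref{equ:decrease} I would telescope the change of $\mathcal{L}_\rho$ across the $x$-, $y$-, and $w$-updates. The $x$-update contributes a nonnegative drop because $x^{k+1}$ minimizes $\mathcal{L}_\rho(\,\cdot\,, y^k, w^k)$ over $\mathcal{S}$. The $w$-step \emph{increases} $\mathcal{L}_\rho$ by $(w^{k+1}-w^k)^T(x^{k+1}-y^{k+1})=\rho\|x^{k+1}-y^{k+1}\|^2$, which by \cref{equ:subbound} is at most $(L_f^2/\rho)\|y^{k+1}-y^k\|^2$. For the $y$-update I would show the subproblem is strongly convex on the ball with modulus $\rho - L_f$: its Hessian is $\nabla^2 f(y) + \rho I \succeq (\rho - L_f)I$ because $\nabla f$ being $L_f$-Lipschitz on the ball forces $\nabla^2 f(y) \succeq -L_f I$ there. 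The standard strong-convexity descent inequality (applicable because the segment $[y^k, y^{k+1}]$ lies in the ball and $\nabla g(y^{k+1})=0$) then produces a drop of at least $((\rho - L_f)/2)\|y^{k+1}-y^k\|^2$. Adding the three contributions gives a net coefficient of $(\rho - L_f)/2 - L_f^2/\rho$ on $\|y^{k+1}-y^k\|^2$, and a short algebraic check shows this is $\ge 1$ whenever $\rho \ge 2(L_f + 1)$.

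The main obstacle, in my view, is the $y$-update estimate: $f$ is quartic and nonconvex, so one cannot use convexity of $f$, and $\nabla f$ is not globally Lipschitz, so the textbook ADMM descent analyses do not apply verbatim. What makes the argument go through is precisely \cref{lem:bound}: it confines the entire sequence to a ball where a local Lipschitz constant $L_f$ exists, and once $\rho > L_f$ the quadratic penalty in $\mathcal{L}_\rho$ overwhelms the indefinite curvature of $f$ on this ball, restoring strong convexity of the $y$-subproblem along the relevant segment. One must simply be careful to apply convexity only on segments that stay inside the ball, which is automatic since the ball is convex and contains both $y^k$ and $y^{k+1}$.
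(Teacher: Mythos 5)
Your proposal is correct and follows essentially the same route as the paper: identity $w^{k+1}=\nabla f(y^{k+1})$ plus local Lipschitz continuity on the ball from \cref{lem:bound} gives \eqref{equ:subbound}, and the descent estimate \eqref{equ:decrease} is obtained by the same three-way decomposition (monotone $x$-step, $w$-step increase bounded by $\tfrac{L_f^2}{\rho}\|y^{k+1}-y^k\|^2$ via \eqref{equ:subbound}, $y$-step drop of $\tfrac{\rho-L_f}{2}\|y^{k+1}-y^k\|^2$, then the algebraic check for $\rho\ge 2(L_f+1)$). The only cosmetic difference is in the $y$-step: you invoke strong convexity of the subproblem objective with modulus $\rho-L_f$ together with $\nabla g(y^{k+1})=0$, whereas the paper applies the descent lemma to $f$ at $y^{k+1}$ and keeps the explicit $\tfrac{\rho}{2}\|y^{k+1}-y^k\|^2$ penalty term --- both arguments rest on $\|\nabla^2 f\|\le L_f$ on the ball and yield the identical constant.
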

\begin{proof}
According to \cref{lem:bound}, $\|y^k\|\le D~(\forall k\ge1)$ for some $D>1$ when $\rho$ satisfies \cref{equ:rhobound}. Then we can obtain a locally Lipschitz constant for $\nabla f(y)$, denoted as $L_f$. For any $k\ge 1$, we have
\begin{equation*}
\begin{aligned}
\|x^{k+1}-y^{k+1}\|&=\frac{\|w^{k+1}-w^k\|}{\rho}\\
(\text{according to }\cref{equ:w})&=\frac{\|\nabla f(y^{k+1})-\nabla f(y^k)\|}{\rho}\\
&\le \frac{L_f}{\rho}\|y^{k+1}-y^k\|.
\end{aligned}
\end{equation*}
For the sufficient descent of $\mathcal{L}_{\rho}(x^k,y^k,w^k)$, we first have
\[\mathcal{L}_{\rho}(x^k,y^k,w^k)\ge\mathcal{L}_{\rho}(x^{k+1},y^{k},w^{k}),\]
according to the optimality of $x^{k+1}$. Then,
\begin{equation*}
\begin{aligned}
&\mathcal{L}_{\rho}(x^k,~y^k,~w^k)-\mathcal{L}_{\rho}(x^{k+1},~y^{k+1},~w^{k+1})\\
\ge &\mathcal{L}_{\rho}(x^{k+1},~y^k,~w^k)-\mathcal{L}_{\rho}(x^{k+1},~y^{k+1},~w^{k+1})\\
=&f(y^k)-f(y^{k+1})+(w^{k+1})^T(y^{k+1}-y^k)+\frac{\rho}{2}\|y^{k+1}-y^k\|^2\\
&+(w^k-w^{k+1})^T(x^{k+1}-y^{k+1})\\
\ge& f(y^k)-f(y^{k+1})+(w^{k+1})^T(y^{k+1}-y^k)+\frac{\rho}{2}\|y^{k+1}-y^k\|^2\\
&-\frac{L_f^2}{\rho}\|y^{k+1}-y^k\|^2\\
\ge&-\frac{L_f}{2}\|y^{k+1}-y^{k}\|^2+\frac{\rho}{2}\|y^{k+1}-y^k\|^2-\frac{L_f^2}{\rho}\|y^{k+1}-y^k\|^2.
\end{aligned}
\end{equation*}
The first inequality comes from \cref{equ:w} and \cref{equ:subbound}. Thus, if $\rho\ge2(L_f+1)$, we obtain that
\begin{equation*}
\mathcal{L}_{\rho}(x^k,~y^k,~w^k)-\mathcal{L}_{\rho}(x^{k+1},~y^{k+1},~w^{k+1})\ge\|y^{k+1}-y^k\|^2,~\forall k\ge1.
\end{equation*}
\end{proof}
\begin{remark}
So far, \cref{lem:bound} and \cref{lem:sum} only need $B\succeq 0$ without other assumption on $B$ to be established. For brevity, in the numerical experiments, we can only update $x$ in the way $x^{k+1}:=(y^k-\frac{ w^k}{\rho})/\|y^k-\frac{ w^k}{\rho}\|$, and then check whether entries of the solution have the same sign. The boundedness of the generated sequence $\{(x^k,~y^k,~w^k)\}$ and \cref{lem:sum} can be established similarly. Then according to   Corollary 2 of Wang et al.\cite{wang2019global}, we can already come to that $\{(x^k,~y^k,~w^k)\}$ has at least one limit point. This limit point $(x^*,~y^*,~w^*)$ is a stationary point of the augmented Lagrangian $\mathcal{L}_{\rho}$. That is,
\begin{equation*}
\begin{aligned}
0&=x^*-y^*,\\
0&=\nabla f(y^*)-w^*,\\
0&=cx^*+w^*~\text{for some $c\in \mathbb{R}$}.
\end{aligned}
\end{equation*}
The last equality comes from the subdifferential of $I_{\hat{\mathcal{S}}}(x)~(\hat{\mathcal{S}}=\{x|\|x\|_2=1\})$ \cite{rockafellar2009variational}. So we can infer that $y^*$ is also a stationary point of \cref{equ:different}.
%\begin{Theorem}[\cite{wang2019global} Corrolary2]\label{thm:convergence}
%For problem \eqref{equ:admm}, where $S$ is any compact set, if $f(y)$ is Lipschitz differentiable, then for any sufficiently large $\beta$,
%the sequence $\{(x^k,~y^k,~w^k)\}$ generated by \eqref{equ:iter2} has at least one limit point, and each limit points is a stationary point of the augmented Lagrangian $\mathcal{L}_{\rho}$.
%\end{Theorem}
\end{remark}

For $\mathcal{S}=\{x|\|x\|_2=1,~x\ge 0\}$, $Proj_{\mathcal{S}}(y)$ also can be computed analytically.
\begin{lemma}\cite[Example 8.9]{bauschke2018projecting}\label{lem:projection}
$\mathcal{S} = \{x|\|x\|_2=1, x\ge 0\}$, and the projection onto $\mathcal{S}$ denoted by $Proj_{\mathcal{S}}(y):=\underset{x\in\mathcal{S}}{\rm{arg}\min}\|x-y\|$,
then
\begin{equation}\label{equ:proj}
Proj_{\mathcal{S}}(y) =
\left\{
\begin{array}{lrc}
\frac{Proj_{R_+}(y)}{\|Proj_{R_+}(y)\|},~if~\underset{i}{\max}~\{y_i\}>0\\
\{\sum\limits_i\alpha_ie_i|\sum\limits_i\alpha_i^2=1, y_i=0\},~if~\underset{i}{\max}~\{y_i\}=0\\
\{e_i|y_i=\underset{j}{\max}~\{y_j\}\},~if ~\underset{i}{\max}~\{y_i\}<0
\end{array}
\right.,
\end{equation}
where $e_i=(0,\cdots,\underset{i}{1},\cdots,0)^T$.
\end{lemma}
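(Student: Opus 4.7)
The plan is to rewrite the projection as an inner-product maximization: since $\|x\|=1$ for every $x\in\mathcal{S}$, minimizing $\|x-y\|^2 = 1 - 2\langle x,y\rangle + \|y\|^2$ over $\mathcal{S}$ is equivalent to maximizing $\langle x,y\rangle$ over $\mathcal{S}$. I would then dispatch on the sign of $\max_i y_i$, handle each regime with a short Cauchy--Schwarz-style argument, and verify feasibility of the proposed optimizer.

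In Case 1 ($\max_i y_i>0$), I would dominate $y$ by its positive part $y_+ := Proj_{\mathbb{R}^n_+}(y)$. For any $x\in\mathcal{S}$, nonnegativity of $x$ gives $\langle x,y\rangle \le \langle x,y_+\rangle$, and Cauchy--Schwarz yields $\langle x,y_+\rangle \le \|y_+\|$ with equality iff $x$ is a nonnegative scalar multiple of $y_+$. Since $\|y_+\|>0$, the unique unit-norm nonnegative multiple is $y_+/\|y_+\|$, which lies in $\mathcal{S}$ and is the projection.

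In Case 3 ($\max_i y_i<0$), $y$ is entrywise negative, so $\langle x,y\rangle = -\sum_i x_i|y_i|$. The key tool is the reverse $\ell^1$--$\ell^2$ inequality $\|x\|_1 \ge \|x\|_2 = 1$ for nonnegative unit-norm $x$, giving
\[
\sum_i x_i |y_i| \;\ge\; \bigl(\min_i |y_i|\bigr)\sum_i x_i \;\ge\; \min_i |y_i|.
\]
Equality in the second inequality forces $\|x\|_1 = 1$, so $x$ is a coordinate vector $e_j$; equality in the first then forces $j$ to achieve $|y_j| = \min_i |y_i|$, equivalently $y_j = \max_i y_i$, matching the stated set.

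Finally, Case 2 ($\max_i y_i=0$) interpolates the other two: $y\le 0$, so $\langle x,y\rangle\le 0$ for $x\in\mathcal{S}$, with equality iff $x_i y_i = 0$ coordinatewise, i.e., $x$ is supported on the zero set of $y$; every nonnegative unit-norm $x$ with this support is optimal. I expect the only place requiring care is Case 3, where the two-step inequality must be tracked so that the equality conditions cleanly identify the support of $x$ as a single argmax index of $y$; the other cases reduce to one-line Cauchy--Schwarz or sign arguments.
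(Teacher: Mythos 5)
Your proof is correct. There is nothing in the paper to compare it against: the lemma is stated with a citation to \cite[Example 8.9]{bauschke2018projecting} and no proof is given in the text. Your argument is a clean, self-contained replacement: reducing $\min_{x\in\mathcal{S}}\|x-y\|$ to $\max_{x\in\mathcal{S}}\langle x,y\rangle$, then using domination by $y_+$ plus Cauchy--Schwarz when $\max_i y_i>0$, the $\ell^1$--$\ell^2$ comparison $\|x\|_1\ge\|x\|_2=1$ (with equality forcing a single nonzero coordinate) when $y<0$, and the support condition $x_iy_i=0$ in the boundary case. The equality-tracking in Case 3 is handled correctly, and both inequalities are shown to be attained by the claimed optimizers. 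One minor observation: in Case 2 you correctly require the coefficients $\alpha_i$ to be nonnegative so that $x\in\mathcal{S}$; the displayed formula in the lemma omits this condition, so your version is actually the more precise one.
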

%\begin{proof}
%Since $\|x\|_2=1$, it is obvious that $\underset{x\in\mathcal{S}}{argmin}~\|x-y\| \Leftrightarrow \underset{x\in\mathcal{S}}{argmax}~x^Ty$. \\
%Case 1. $\mathop{max}\limits_i~\{y_i\}>0$. We have for any $x\ge 0$, $\|x\|=1$, $x^Ty\le\sum\limits_{i\in \{j|y_j>0\}}x_iy_i\le\|Proj_{R_+}(y)\|$, the equality holds when $x=\frac{Proj_{R_+}(y)}{\|Proj_{R_+}(y)\|}$. \\
%Case 2. $\underset{i}{max}~\{y_i\}=0$. It can be solved similarly as Case 1. \\
%Case 3. $\underset{i}{max}~\{y_i\}<0$. Begin with $$\underset{x_1,~x_2}{max}~x_1y_1+x_2y_2,$$ where $x_1^2+x_2^2=1$ and $x_1\ge 0,~x_2\ge 0$. It can also be formulated as $\underset{0\le\theta\le\frac{\pi}{2}}{max}y_1\cos{\theta}+y_2\sin{\theta}$. We find that the optimal value is $max\{y_1, y_2\}$ with $x=e_1$ if $y_1\ge y_2$ or otherwise. Assume that it holds with $x\in R^n$. For $x\in R^{n+1}$, we regard the projection problem as $\underset{0\le x_{n+1}\le 1}{max}~\underset{x_1,\cdots,x_n}{max}x_1y_1+\cdots+x_ny_n+x_{n+1}y_{n+1}$, where $x\in\mathcal{S}$. By induction, the optimal solution will be $x\in\{e_i|y_i=\underset{j}{max}~\{y_j\}\}$.
%\end{proof}

Now, we can reach the following theorem for convergence of the standard ADMM \cref{equ:iter2} to the global optimum.
\begin{theorem}\label{thm:ADMM}
 For some $D>1$ and any given initial point $(y^0,w^0)$, if $$\rho>\max\{\frac{\|w^0\|-2D\lambda_{min}(B)}{D-1},~\frac{2D^3\alpha+2D(\lambda_{max}(B)-\lambda_{min}(B))}{D-1},~2(L_f+1),~2\alpha+2\lambda_{max}(B)\},$$ where $L_f$ is the locally Lipschitz constant for $\nabla f(y)$ when $\|y\|\le D$, then the sequence $\{(x^k,~y^k,~w^k)\}$ generated by the standard ADMM \cref{equ:iter2}, will converge to $(x^*,~y^*,~w^*)$. $y^*$ is a global optimum of \cref{equ:different}.
\end{theorem}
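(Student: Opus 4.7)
The plan is to combine the descent estimate of Lemma \ref{lem:sum}, the boundedness in Lemma \ref{lem:bound}, the uniqueness statement in Lemma \ref{lem:eigen}, and the global optimality criterion in Theorem \ref{thm:global}. Under the stated choice of $\rho$, Lemma \ref{lem:bound} gives $\|y^k\|\le D$, and since $\|x^k\|=1$ and $w^k=\nabla f(y^k)$ by \cref{equ:w}, the whole sequence $\{(x^k,y^k,w^k)\}$ is bounded, and $\mathcal{L}_\rho$ is bounded below on it. Telescoping the descent inequality \cref{equ:decrease} gives $\sum_{k\ge 1}\|y^{k+1}-y^k\|^2<\infty$, hence $y^{k+1}-y^k\to 0$; plugging into \cref{equ:subbound} yields $x^{k+1}-y^{k+1}\to 0$, and the local Lipschitz property of $\nabla f$ applied to \cref{equ:w} gives $w^{k+1}-w^k\to 0$.

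By Bolzano--Weierstrass, some subsequence $(x^{k_j},y^{k_j},w^{k_j})$ converges to a limit $(x^*,y^*,w^*)$. The vanishing feasibility residual forces $x^*=y^*$; closedness of $\mathcal{S}$ keeps $x^*\ge 0$ and $\|x^*\|=1$; and passing to the limit in the $y$-subproblem optimality condition gives $w^*=\nabla f(y^*)=2\alpha\mathcal{A}(y^*)^3+2By^*$. Writing down the KKT conditions for the projection step \cref{equ:proj} and passing to the limit produces $\lambda^*\in\mathbb{R}$ and $\eta^*\ge 0$ with $\eta^*_i y^*_i=0$ such that $\alpha\mathcal{A}(y^*)^3+By^*=\lambda^*y^*+\eta^*$. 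To eliminate the complementarity slack $\eta^*$, observe that on any index $i$ with $y^*_i=0$ this reads $(By^*)_i=\eta^*_i\ge 0$, whereas $b_{ij}\le 0$ for $j\ne i$ and $y^*\ge 0$ force $(By^*)_i\le 0$; hence $(By^*)_i=0$ and $b_{ij}=0$ for every $j$ with $y^*_j>0$, contradicting irreducibility of $B$ exactly as in the proof of \cref{lem:eigen}. Consequently $y^*>0$, $\eta^*=0$, and $(\lambda^*,y^*)$ is a genuine NEPv eigenpair \cref{equ:kkt} with positive eigenvector.

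At this stage Lemma \ref{lem:eigen} identifies $y^*$ as the unique nonnegative eigenvector of \cref{equ:kkt}, and Theorem \ref{thm:global} identifies it with the unique nonnegative global minimizer of \cref{equ:different}. Since every convergent subsequence of the bounded sequence $\{y^k\}$ must therefore share this same limit, the full sequence $\{y^k\}$ converges to $y^*$, whence $x^k\to y^*$ and $w^k\to \nabla f(y^*)$. The main obstacle I anticipate is the argument that the complementarity slack $\eta^*$ vanishes at the limit, because the multi-valued projection \cref{equ:proj} could a priori admit boundary fixed points with zero entries; the irreducibility argument borrowed from \cref{lem:eigen} is precisely the device that rules this out and lets the NEPv characterization take over.
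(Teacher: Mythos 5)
Your proposal is correct and follows essentially the same route as the paper: boundedness from \cref{lem:bound}, the telescoped descent estimate \cref{equ:decrease} giving $y^{k+1}-y^k\to0$ and $x^k-y^k\to0$, the irreducibility of $B$ ruling out zero entries at any cluster point, and then \cref{lem:eigen} and \cref{thm:global} to identify the unique positive eigenvector with the global optimum and upgrade subsequential to full convergence. The only difference is cosmetic: you extract the limiting NEPv relation from the KKT/complementarity conditions of the projection subproblem and kill the multiplier $\eta^*$ via the sign argument, whereas the paper works directly with the explicit projection formula of \cref{lem:projection} (which is where its extra condition $\rho>2\alpha+2\lambda_{max}(B)$ is used, to guarantee the nondegenerate branch); the underlying irreducibility computation is identical.
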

\begin{proof}
Since $\{(x^k,~y^k,~w^k)\}$ is bounded, we have $\mathcal{L}_{\rho}(x^k,~y^k,~w^k)$ is lower bounded and $\sum_{k=1}^{\infty}\|y^{k+1}-y^k\|^2<\infty$ resulting from \cref{equ:decrease}. This implies that
\begin{equation}\label{equ:adjacent} \lim\limits_{k\rightarrow\infty}\|y^{k+1}-y^k\|=0.
\end{equation}
Then, we have
\[\lim\limits_{k\rightarrow\infty}\|x^{k}-y^{k}\|=0,\]
according to \eqref{equ:subbound}.
For any cluster point $(x^*,~y^*,~w^*)$ of the generated sequence, we have
\begin{align}
y^*=x^*\ge0;\label{equ:x=y}\\
\|y^*\|=\|x^*\|=1;\label{equ:norm}\\
w^*=\nabla f(y^*).\notag
\end{align}

Thus, if every cluster point $y^*$ is an eigenvector of \eqref{equ:kkt}, they are actually the same one, that is the unique global optimizer, according to \cref{lem:eigen}. And we can conclude that the entire sequence is convergent. Now, we only need to prove that $y^*$ is a nonnegative eigenvector for the NEPv \cref{equ:kkt}. We denote the subsequence that converges to $(x^*,~y^*,~w^*)$ as $\{(x^{k_j},~y^{k_j},~w^{k_j})\}$.

\textbf{\boldmath{Case 1, There exists a nonempty set $I\subset[n]$, $\overline{I}=[n]\backslash I$, such that $y^*_i>0~(i\in \overline{I})$ and $y^*_i=0~(i\in I)$.}\boldmath}\\
 Let $z=y^*-\frac{\nabla f(y^*)}{\rho}$, then Assumption \ref{ass:structure} results in the fact that there must be a $z_i>0$ for some $i\in I$. Otherwise,  $$0\ge z_i=-\frac{2\sum_{j\neq i}b_{ij}y^*_j}{\rho}=-\frac{2\sum_{j\in \overline{I}}b_{ij}y^*_j}{\rho},~\forall i\in I.$$
 This leads to $$b_{ij}=0,~ \forall j\notin I,$$ which contradicts the irreducibility assumption. Thus $x^*_i>0$ for some $i\in I$ correspondingly, which contradicts $x^*_i=y^*_i=0$. We come to the conclusion that $y^*>0$.

\textbf{Case 2, \boldmath${x^*=y^*>0}$\boldmath.} There exists $j_0$, $x^{k_j}>0~(\forall j>j_0)$. According to \cref{lem:projection},
$$x^{k_j}=\frac{y^{k_j-1}-\frac{\nabla f(y^{k_j-1})}{\rho}}{\|y^{k_j-1}-\frac{\nabla f(y^{k_j-1})}{\rho}\|};$$ or $$x^{k_j}\in\{\sum\limits_i\alpha_ie_i|\sum\limits_i\alpha_i^2=1,~\alpha_i\neq 0,~(y^{k_j-1}-\frac{\nabla f(y^{k_j-1})}{\rho})_i=0,~\forall~i\in[n]\}.$$

However the second kind of $x^{k_j}$ is impossible. According to \cref{equ:norm},
$$\lim_{j\rightarrow \infty}\|y^{k_j}\|=1.$$ On the other hand,
\begin{equation*}
\begin{aligned}
\|\nabla f(y^{k_j})\|&=\|2\alpha \mathcal{A}(y^{k_j})^3+2By^{k_j}\|\le 2\alpha\|y^{k_j}\|^3+2\lambda_{max}(B)\|y^{k_j}\|.
\end{aligned}
\end{equation*}
Thus, if $\rho>2\alpha+2\lambda_{max}(B)$, for sufficiently large $j$, $\|y^{k_j}\|>\frac{\|\nabla f(y^{k_j})\|}{\rho}$, which implies that
$y^{k_j}-\frac{\nabla f(y^{k_j})}{\rho}\neq 0$. Combined with \eqref{equ:adjacent}, we obtain that for sufficient large $j$,
$$y^{k_j-1}-\frac{\nabla f(y^{k_j-1})}{\rho}\neq 0.$$ Thus,
\begin{equation*}
y^*=x^*=\underset{j\rightarrow \infty}{\lim}x^{k_j}=\underset{j\rightarrow \infty}{\lim}\frac{y^{k_j-1}-\frac{\nabla f(y^{k_j-1})}{\rho}}{\|y^{k_j-1}-\frac{\nabla f(y^{k_j-1})}{\rho}\|}=\frac{y^*-\frac{\nabla f(y^*)}{\rho}}{\|y^*-\frac{\nabla f(y^*)}{\rho}\|}.
\end{equation*}
That is,
\begin{equation*}
\begin{aligned}
\nabla f(y^*)&=\rho(1-\|y^*-\frac{\nabla f(y^*)}{\rho}\|)y^*;\\ \|y^*\|&=1.
\end{aligned}
\end{equation*}
It is obvious that $y^*$ is the positive eigenvector of \cref{equ:kkt}.
\end{proof}
%\begin{remark}
%The bound for $\rho$ given in \cref{thm:ADMM} are rough. For the numerical experiments in the next section, we in fact chose a smaller $\rho$ and still obtain satisfactory convergence.
%\end{remark}

\subsection{Inexact ADMM}\label{subsec:inexact}
Before we move on to the numerical section, the convergence of the inexact version of \eqref{equ:iter2} is discussed, since there is no analytical solution for the subproblem of $y$.
\begin{theorem}\label{thm:inexact}
Suppose that there is a nonnegative non-increasing sequence $\{\epsilon_k\}$ such that $\sum_k\epsilon_k<+\infty$. The solution of the subproblem of $y$ in \eqref{equ:iter2} satisfies
\[\|\nabla_y \mathcal{L}(x^{k+1},~y^{k+1},~w^k)\|=\|\nabla f(y^{k+1})-w^k-\rho(x^{k+1}-y^{k+1})\|\le\epsilon_k.\]
Then, the convergence results in \cref{thm:ADMM} for this kind of inexact ADMM still hold.
\end{theorem}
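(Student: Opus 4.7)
The plan is to revisit the proofs of \cref{lem:bound}, \cref{lem:sum}, and \cref{thm:ADMM}, and verify that the summability of $\{\epsilon_k\}$ is sufficient to absorb the perturbations introduced by the inexact $y$-update. The central observation is that the exact identity $\nabla f(y^{k+1}) = w^{k+1}$ from \cref{equ:w}, which combined optimality of the $y$-subproblem with the dual update, is now weakened to
\begin{equation*}
\|\nabla f(y^{k+1}) - w^{k+1}\| = \|\nabla f(y^{k+1}) - w^k - \rho(x^{k+1}-y^{k+1})\| \le \epsilon_k.
\end{equation*}
Wherever the original arguments used $\nabla f(y^k) = w^k$, we now pick up an additive perturbation of size $\epsilon_{k-1}$ or $\epsilon_k$.

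First, I would reprove boundedness by modifying \cref{equ:y1} and \cref{equ:yk}: each right-hand side gains a term of norm at most $\epsilon_{k-1}$, and the inverse-matrix estimate still controls $\|y^{k+1}\|$ at the cost of enlarging $\rho$ (or $D$) by a constant depending only on $\epsilon_0$, since $\{\epsilon_k\}$ is non-increasing. This keeps $\{(x^k, y^k, w^k)\}$ in a bounded region and preserves a local Lipschitz constant $L_f$ for $\nabla f$. Next, I would redo the two estimates in \cref{lem:sum}. The residual bound becomes
\begin{equation*}
\rho\|x^{k+1}-y^{k+1}\| = \|w^{k+1}-w^k\| \le L_f\|y^{k+1}-y^k\| + \epsilon_k + \epsilon_{k-1},
\end{equation*}
and the descent computation of $\mathcal{L}_\rho$ picks up extra contributions of the form $-\epsilon_k\|y^{k+1}-y^k\|$ (from replacing $w^{k+1}$ by $\nabla f(y^{k+1})$ in the Lipschitz lower bound for $f(y^k)-f(y^{k+1})+(w^{k+1})^T(y^{k+1}-y^k)$) together with $-(\epsilon_k+\epsilon_{k-1})^2/\rho - L_f\|y^{k+1}-y^k\|(\epsilon_k+\epsilon_{k-1})/\rho$ from the cross term $(w^k-w^{k+1})^T(x^{k+1}-y^{k+1})$. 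Splitting the bilinear perturbations by Young's inequality and taking $\rho$ slightly larger than in \cref{thm:ADMM}, one obtains
\begin{equation*}
\mathcal{L}_\rho(x^k,y^k,w^k) - \mathcal{L}_\rho(x^{k+1},y^{k+1},w^{k+1}) \ge \tfrac{1}{2}\|y^{k+1}-y^k\|^2 - C\,(\epsilon_k^2 + \epsilon_{k-1}^2)
\end{equation*}
for some constant $C>0$. Since $\{\epsilon_k\}$ is summable and eventually bounded by $1$, $\{\epsilon_k^2\}$ is also summable; telescoping then yields $\sum_k \|y^{k+1}-y^k\|^2 < \infty$, hence $\|y^{k+1}-y^k\|\to 0$ and $\|x^{k+1}-y^{k+1}\|\to 0$.

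Finally, for the limit analysis of \cref{thm:ADMM}, along any convergent subsequence $(x^{k_j},y^{k_j},w^{k_j}) \to (x^*,y^*,w^*)$, since $\epsilon_{k_j}\to 0$ the inexact relation forces $w^* = \nabla f(y^*)$; together with $x^* = y^* \ge 0$ and $\|y^*\|=1$, these are exactly the equations used in the original case analysis. The arguments ruling out $y^*$ having a zero entry (Case~1) and the exceptional branches of $Proj_\mathcal{S}$ (Case~2) depend only on the irreducibility of $B$ and the projection formula \cref{equ:proj}, and carry over unchanged. The main obstacle is the descent step: cleanly tracking the cross term $\epsilon_k\|y^{k+1}-y^k\|$ and absorbing it via Young's inequality without spoiling summability. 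The non-increasing hypothesis on $\{\epsilon_k\}$ is convenient here, as it allows the enlarged $\rho$ and the constant $C$ in the descent inequality to be chosen uniformly in $k$.
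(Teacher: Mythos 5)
Your proposal is correct and follows essentially the same route as the paper: perturb the identities of \cref{lem:bound} and \cref{lem:sum} by the inexactness residuals, show the accumulated perturbation is summable, telescope to recover $\sum_k\|y^{k+1}-y^k\|^2<\infty$, and observe that the limit-point analysis of \cref{thm:ADMM} is unchanged. The only (immaterial) difference is that you absorb the cross terms $\epsilon_k\|y^{k+1}-y^k\|$ via Young's inequality, whereas the paper simply bounds $\|y^{k+1}-y^k\|$ by a constant (using boundedness of $\{y^k\}$) so that these terms are summable as they stand.
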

\begin{proof}
The analysis scheme is analogous to that for the exact version. So we just simply note the differences in the proof above.

First, for the boundedness of $\{(x^k,~y^k,~w^k)\}$, \eqref{equ:w} in \cref{lem:bound} turns out to be
\begin{equation*}
\begin{aligned}
\|\nabla f(y^1)+\rho y^1\|&\le\|w^0+\rho x^1\|+\epsilon_1;\\
\|\nabla f(y^k)-w^k\|&\le \epsilon_k,~\forall k\ge 1;\\
\|\nabla f(y^{k+1})+\rho y^{k+1}\|&\le\|\nabla f(y^k)+\rho x^{k+1}\|+\epsilon_{k+1},~\forall k\ge1.
\end{aligned}
\end{equation*}
It is obvious that the remaining proving process of \cref{lem:bound} will not be influenced. Thus, the sequences generated by the inexact ADMM is still bounded with proper $\rho$.

Then analogous to \cref{lem:sum}, we have
\begin{equation*}
\begin{aligned}
\|x^{k+1}-y^{k+1}\|&=\frac{\|w^{k+1}-w^k\|}{\rho}\\
&=\frac{\|(w^{k+1}-\nabla f(y^{k+1}))-(w^k-\nabla f(y^k))+\nabla f(y^{k+1})-\nabla f(y^k)\|}{\rho}\\
&\le \frac{L_f}{\rho}\|y^{k+1}-y^k\|+\frac{2\epsilon_k}{\rho},
\end{aligned}
\end{equation*}
and
\begin{equation*}
\begin{aligned}
&\mathcal{L}_{\rho}(x^k,~y^k,~w^k)-\mathcal{L}_{\rho}(x^{k+1},~y^{k+1},~w^{k+1})\\
\ge&f(y^k)-f(y^{k+1})+(w^{k+1})^T(y^{k+1}-y^k)+\frac{\rho}{2}\|y^{k+1}-y^k\|^2\\
&+(w^k-w^{k+1})^T(x^{k+1}-y^{k+1})\\ =&f(y^k)-f(y^{k+1})+(\nabla f(y^{k+1}))^T(y^{k+1}-y^k)+\frac{\rho}{2}\|y^{k+1}-y^k\|^2\\
&-\rho\|x^{k+1}-y^{k+1}\|^2+(w^{k+1}-\nabla f(y^{k+1}))^T(y^{k+1}-y^k)\\
\ge&-\frac{L_f}{2}\|y^{k+1}-y^{k}\|^2+\frac{\rho}{2}\|y^{k+1}-y^k\|^2-\frac{L_f^2}{\rho}\|y^{k+1}-y^k\|^2\\
&-\epsilon_{k+1}\|y^{k+1}-y^k\|-\frac{1}{\rho}(4\epsilon_k^2+4\epsilon_kL_f\|y^{k+1}-y^{k}\|).\\
\end{aligned}
\end{equation*}
Since $y^k$ is bounded, for all $k\ge 1$ and $\sum_k\epsilon_k<+\infty$, we obtain that with proper $\rho$
\begin{equation*}
\mathcal{L}_{\rho}(x^k,~y^k,~w^k)-\mathcal{L}_{\rho}(x^{k+1},~y^{k+1},~w^{k+1})\ge C\|y^{k+1}-y^k\|^2-\eta_k,
\end{equation*}
where constant $C>0$,
\[\eta_k=\epsilon_{k+1}\|y^{k+1}-y^k\|+\frac{1}{\rho}(4\epsilon_k^2+4\epsilon_kL_f\|y^{k+1}-y^{k}\|),\]
$\sum_k\eta_k<+\infty$.
Thus, we can still derive that
\[\lim\limits_{k\rightarrow\infty}\|y^{k+1}-y^{k}\|=0,~\lim\limits_{k\rightarrow\infty}\|x^{k}-y^{k}\|=0.\]
The rest of the proof is almost the same as \cref{thm:ADMM}.
\end{proof}
\begin{remark}
In the next section, we use the Newton method to solve the subproblem of $y$, which can generate a solution that satisfies the condition in \cref{thm:inexact}.
\end{remark}

\section{Numerical results}
\label{sec:numerical}

In this section, by application on the non-rotating BEC problem, we explain our theories about global optimum and explore the convergence of the RN method and inexact ADMM for solving this special nonconvex optimization problem with numerical experiments.

The energy functional minimization problem of non-rotating BEC is defined as
\begin{equation*}\label{equ:energy}
\left\{
\begin{array}{lrc}
\min \quad E(\phi(\textbf{x})):= \int_{R^d}[\frac{1}{2}|\nabla\phi(\textbf{x})|^2+V(\textbf{x})|\phi(\textbf{x})|^2+\frac{\beta}{2}|\phi(\textbf{x})|^4]d\textbf{x} \\
{\rm s.t.}\quad \int_{\mathbb{R}^d}|\phi(\textbf{x})|^2d\textbf{x}=1, ~E(\phi)<\infty.
\end{array}
\right.
\end{equation*}
where $\textbf{x}\in \mathbb{R}^d$ is the spatial coordinate vector, $V(\textbf{x})$ is an external trapping potential, and the given constant $\beta$ is the dimensionless interaction coefficient, see \cite{bao2013mathematical}. The minimizer $\phi^*(\textbf{x})$ is defined as the ground state. We only consider $\beta>0$ in this paper. In most applications of BEC, the harmonic potential is used \cite{bao2004computing,bao2003ground}.
\begin{equation*}
V(\textbf{x})=\frac{1}{2}
\left\{
\begin{array}{lrc}
\gamma_x^2x^2,& d=1,\\
\gamma_x^2x^2+\gamma_y^2y^2, & d=2,\\
\gamma_x^2x^2+\gamma_y^2y^2+\gamma_z^2z^2, & d=3,
\end{array}
\right.
\end{equation*}
where $\gamma_x$, $\gamma_y$ and $\gamma_z$ are three given positive constants. $V(\textbf{x})$ can have other forms, which turn out to be positive diagonal matrix after the finite difference. So we only take the harmonic potential as the example in our numerical experiments.
Using the finite difference, we can reformulate the BEC problem as \cref{equ:different}. Unless otherwise specified, we take $\gamma_x=\gamma_y=\gamma_z=1$, and the space domain $D=[0,1],~[0,1]\times[0,1],~[0,1]\times[0,1]\times[0,1]$, for $d=1,~d=2,~d=3$, respectively.

Through the finite difference method with difference step as $h=\frac{1}{n}$ and dividing the $D$ evenly along each direction, the coefficient $\alpha$ in \cref{equ:different} will be $\beta n,~\beta n^2,~\beta n^3$ accordingly. And $B$ is a symmetric positive definite sparse matrix satisfying \cref{ass:structure}. With the division getting finer, that is, $n$ going large, the scale of discretization problem increases rapidly. We refer the reader to \cite{bao2013optimal} for the convergence of this finite difference discretization problem to the original energy functional optimization problem.

We implement all the following algorithms in MATLAB (Realease 2016b) and perform them on a Lenovo laptop with an Intel(R) Core(TM) Processor with access to 8GB of RAM.
\subsection{Implementation details}
For the NR method, the parameters are set as Wu et al. \cite{wu2017regularized}. The stopping criterion is $$\|x^{k+1}-x^{k}\|_{\infty}\le 10^{-6}.$$
For ADMM, although we have given a rough bound for the parameter $\rho$ in \cref{thm:ADMM}, it seems too large for a satisfactory convergence in practice and is sensitive to the performance. We choose the relatively good one after tuning and present different results of $\rho$ in \cref{subsec:rho}. The solver for the convex subproblem in ADMM is the Newton method, and we use Gauss-Seidel method to get the descent direction. The stopping criterion for ADMM is $$\|x^{k+1}-y^{k+1}\|\le \epsilon^{pri},~\|\rho(x^{k+1}-x^{k})\|\le \epsilon^{dual},$$
where
\begin{equation*}
\begin{aligned}
\epsilon^{pri}&=\sqrt{n}\epsilon^{abs}+\epsilon^{rel}\max\{\|x^{k+1}\|,\|y^{k+1}\|\},\\
\epsilon^{dual}&=\sqrt{n}\epsilon^{abs}+\epsilon^{rel}\|\rho w^{k+1}\|,
\end{aligned}
\end{equation*}
with $\epsilon^{abs}=\epsilon^{rel}=10^{-6}$.

In the following, $N$ stands for the number of split points along each direction, including two endpoints.
\subsection{Solving the SDP problem}
According to \cref{rem:sdp,lem:tight}, the SDP relaxation problem \cref{equ:sdp1} is a convex problem and equivalent to the origin nonconvex problem \cref{equ:different}. It can be computed directly by CVX or QSDPNAL \cite{li2018qsdpnal}. So we can regard the value computed from the SDP relaxation problem as the global optimization value of \cref{equ:different} to validate the convergence to a global optimum of the RN method and ADMM. In \cref{tab:sdpvalue} we take the QSDPNAL to solve the one-dimensional discretized BEC problem as an example with $\beta=0.5$, which will stop with the relative KKT residual of \cref{equ:sdp1} smaller that $10^{-8}$. \cref{fig:sdptime} illustrates the increase in computation time as the problem scale becomes larger.
\makeatletter
\newcommand\figcaption{\def\@captype{figure}\caption}
\newcommand\tabcaption{\def\@captype{table}\caption}
\makeatother

\begin{figure}[H]
	\centering
		\begin{minipage}{0.4\textwidth}
			\centering
            \tabcaption{Solving the SDP relaxation problem by QSDPNAL in one-dimensional case.}
            \label{tab:sdpvalue}
            \begin{tabular}{|l|lll|}
            \hline
            N-1   & 50     & 100     & 200     \\ \hline
            obj & 5.4477 & 5.4489  & 5.4492  \\ \hline
            $\lambda_0$ & 5.8198 & 5.8210 & 5.8214 \\ \hline
            cpu(s) & 0.7160 & 0.4440  & 0.8470  \\ \hline
            \multicolumn{4}{|l|}{}           \\ \hline
            N-1   & 500    & 1000    & 1500    \\ \hline
            obj & 5.4493 & 5.4499  & 5.4502  \\ \hline
            $\lambda_0$ & 5.8214 & 5.8215 & 5.8215 \\ \hline
            cpu(s) & 6.4510 & 29.5710 & 91.3060 \\ \hline
            \end{tabular}
		\end{minipage}
	\hspace{0.5in}
		\begin{minipage}[]{0.4\textwidth}
			\centering
            \includegraphics[width=6cm]{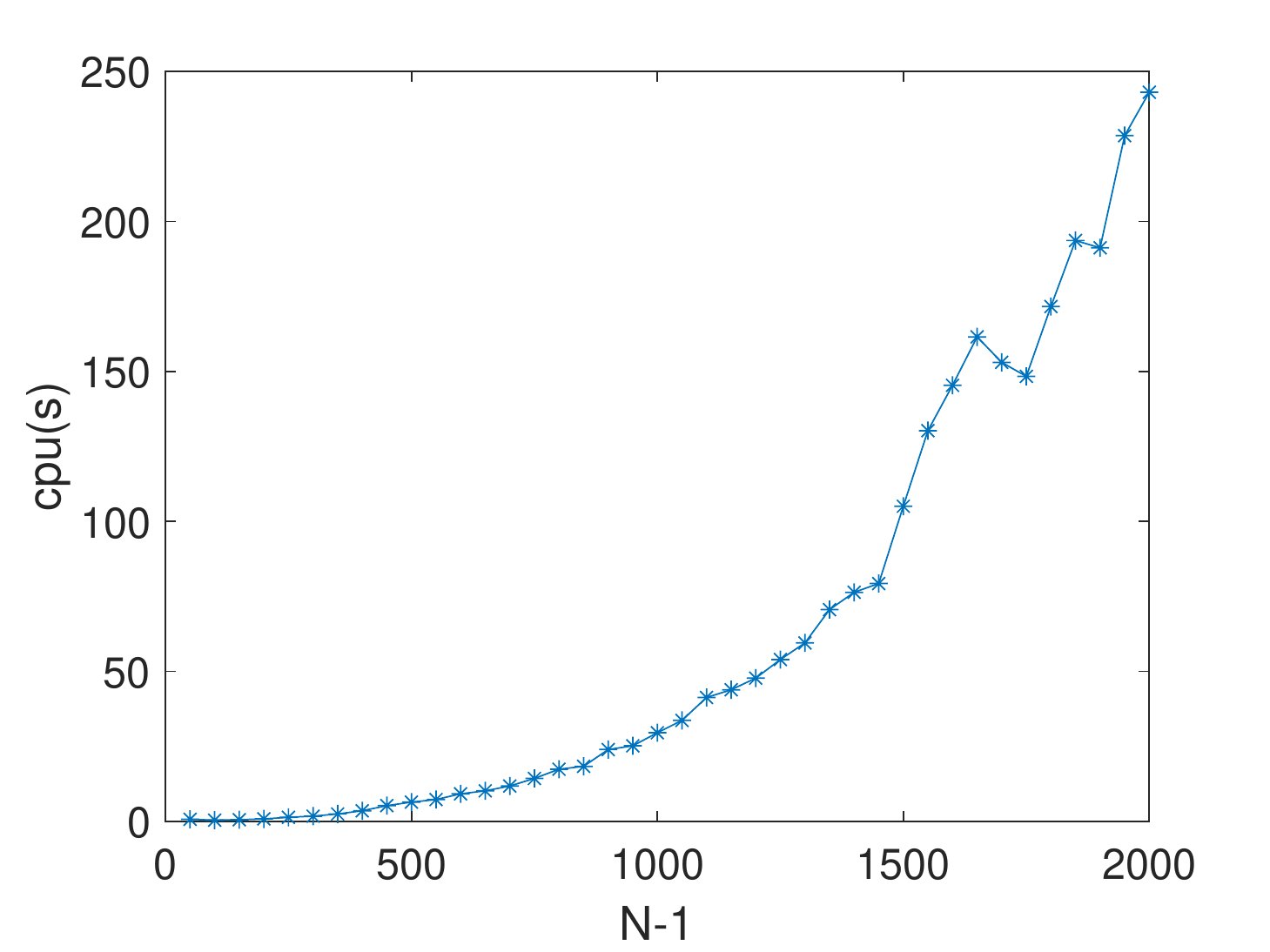}
            \figcaption{The cpu time via the problem scale.}
            \label{fig:sdptime}
		\end{minipage}%
\end{figure}
%\begin{table}[]
%\caption{}
%\centering
%\label{tab:sdpvalue}
%\begin{tabular}{|l|lll|}
%\hline
%n   & 50     & 100     & 200     \\ \hline
%obj & 5.4477 & 5.4489  & 5.4492  \\ \hline
%cpu & 0.7160 & 0.4440  & 0.8470  \\ \hline
%n   & 500    & 1000    & 1500    \\ \hline
%obj & 5.4493 & 5.4499  & 5.4502  \\ \hline
%cpu & 6.4510 & 29.5710 & 91.3060 \\ \hline
%\end{tabular}
%\end{table}
%\begin{figure}[H]
%\centering
%\includegraphics[width=6cm]{sdptime.pdf}
%\caption{The cpu time via the problem scale.}
%\label{fig:sdptime}
%\end{figure}

Although we have taken the advantage of the structure of \cref{equ:different} to just relax it into a quadratic SDP problem, instead of relaxing it further into the SDP problem with linear objective function. The SDP problem still becomes too large to be solved efficiently for two and three dimensional discretized BEC problems, see \cref{tab:spdlarge}.
\begin{table}[H]\label{tab:spdlarge}
\caption{Solving the SDP problem by QSDPNAL in three-dimensional case. The superscript '*' means that the solver stops without finding the optimum.}
\centering
\begin{tabular}{|cl|l|l|l|l|}
\hline
\multicolumn{1}{|l}{}                      &        & N=9     & N=17     & N=33    & N=65     \\ \hline
\multicolumn{1}{|c|}{\multirow{2}{*}{d=2}} & cpu(s) & 0.6720  & 1.2730   & 31.7290 & 726.4940 \\ \cline{2-6}
\multicolumn{1}{|c|}{}                     & obj    & 10.5802 & 10.6755  & 10.6994 & {$10.7069^*$}  \\ \hline
\multicolumn{1}{|c|}{\multirow{2}{*}{d=3}} & cpu(s) & 3.6185  & 2.5075e3 &\multicolumn{1}{c|}{--} & \multicolumn{1}{c|}{--}  \\ \cline{2-6}
\multicolumn{1}{|c|}{}                     & obj    & 15.5864 & $16.0012^*$  &\multicolumn{1}{c|}{--} & \multicolumn{1}{c|}{--}    \\ \hline
\end{tabular}
\end{table}

We also observed from the numerical results that when $\beta=0.5$,  the smallest eigenvalue and optimal value tend to increase as $N$ becomes large. Similar numerical behavior can be seen in Yang  et al. \cite{yang2019numerical}.
\begin{conjecture}
The smallest eigenvalue and the optimal value of the discretized problem are monotone nondecreasing with $N$ when $N$ is large enough, and converge to those of the original problem.
\end{conjecture}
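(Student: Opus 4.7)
Let $E_N$ and $\lambda_0^{(N)}$ denote the discrete optimum and smallest NEPv eigenvalue at resolution $N$, and let $E^\ast$, $\lambda_0^\ast$ be their continuous counterparts. The plan is to treat convergence and monotonicity separately, since the two questions rely on quite different machinery.

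For convergence, I would proceed by a standard variational argument. First, establish uniform coercivity of the discrete energies, which follows from $B\succeq 0$, the positive diagonal contributed by the harmonic potential, and the unit-sphere constraint. Second, construct a recovery sequence: given the continuous nonnegative ground state $\phi^\ast$, which is known in BEC theory to exist, be unique, and lie in $H^2\cap H^1_0(D)$, set $x^{(N)}_i := h^{d/2}\phi^\ast(\mathbf{x}_i)/Z_N$, where $Z_N$ is the $\ell^2$ normalization constant. Taylor expansion together with $H^2$ regularity of $\phi^\ast$ then yields $E_N(x^{(N)}) = E(\phi^\ast) + O(h)$, so $\limsup_N E_N \le E^\ast$. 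For the matching lower bound, take discrete minimizers $x^{(N),\ast}$ and extend them to piecewise multilinear functions $\tilde\phi^{(N)}\in H^1_0(D)$. Uniform coercivity gives an $H^1$ bound, so a subsequence converges weakly in $H^1$ and strongly in $L^4$ to some $\phi^\infty$ with $\|\phi^\infty\|_{L^2}=1$. Weak lower semicontinuity of the Dirichlet integral plus continuity of the $L^4$ term yield $E(\phi^\infty)\le\liminf_N E_N$, forcing $\phi^\infty$ to be a ground state and the whole sequence to converge by uniqueness (\cref{lem:eigen} and its continuous analogue). The convergence of $\lambda_0^{(N)}$ then follows, since by \cref{lem:eigen} one has $\lambda_0^{(N)} = 2E_N + \tfrac{\alpha}{2}\sum_i(x^{(N),\ast}_i)^4$, and the quartic term converges to $\tfrac{\beta}{2}\int |\phi^\ast|^4$ by the same strong $L^4$ convergence.

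For monotonicity, I would exploit the classical fact that the eigenvalues of the discrete Dirichlet Laplacian approximate those of $-\Delta$ monotonically from below: in one dimension $\lambda_k^{(N)}(-\Delta_h)=\tfrac{4}{h^2}\sin^2(\tfrac{k\pi h}{2})$ is strictly increasing in $N$ for each fixed $k$, and by tensorization the same holds on rectangular grids in higher dimension. This gives monotonicity of the smallest eigenvalue of $B$ alone. To upgrade this to the nonlinear setting, I would linearize around the continuous ground state: for $N$ large the positive discrete minimizer $x^{(N),\ast}$ is close (as a piecewise multilinear extension) to $\phi^\ast$ in $H^1$ by the convergence argument above, so the effective Schrödinger operator $B + \alpha\mathcal{A}(x^{(N),\ast})^2$ is a small perturbation of the linearization at $\phi^\ast$. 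A first-order asymptotic expansion of $\lambda_0^{(N)}$ and $E_N$ in $h$ would then reduce the monotonicity statement to that of the linear problem, modulo higher-order terms in $h$. The simplicity and positivity of $\lambda_0^{(N)}$ from \cref{lem:eigen} ensures analyticity of the perturbation, so no spurious level crossings interfere.

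The principal obstacle will be the monotonicity of the nonlinear correction. The linear Laplacian monotonicity is classical, and the convergence step is largely routine, but the quartic term $\tfrac{\alpha}{2}\sum_i x_i^4$ with $\alpha=\beta n^d$ is not the restriction of a fixed functional to an increasing sequence of subspaces, which is the usual source of Galerkin-type monotonicity. The perturbation argument must therefore produce a sign-definite quantitative bound on the change in the quartic correction between levels $N$ and $N+1$, and must dominate the $O(h^2)$ kinetic discretization error by a strictly negative term. A cleaner first target is dyadic refinement $N\mapsto 2N$, where the nested-grid structure allows an explicit comparison between $E_N(x)$ and $E_{2N}(\mathcal{I}_{2N,N}x)$ via midpoint-rule error estimates; establishing such a nested-grid inequality, combined with the linear monotonicity, would already establish the conjecture along the subsequence of dyadic grids, which is presumably the hardest and most informative case.
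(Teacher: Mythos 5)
The paper offers no proof of this statement: it is stated explicitly as a conjecture, motivated only by the numerical observations in the tables, and the authors remark that a theoretical analysis ``might worth further discussion.'' So there is no paper argument to compare against, and the question is whether your plan would actually close the conjecture. The convergence half is plausible and essentially known (the paper itself defers to the cited reference on the convergence of the finite difference discretization, and your recovery-sequence/compactness argument is the standard route), but the monotonicity half --- which is the actual content of the conjecture --- is not established by your proposal, and you acknowledge as much.

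The gap is worse than ``a missing sign-definite bound on the nonlinear correction.'' Even granting a full asymptotic expansion $E_N = E^\ast - C h^2 + o(h^2)$ with $C>0$ (which would already require showing that the quadrature errors of the potential and quartic terms do not overwhelm the kinetic term's one-sided error --- not automatic, since the midpoint-type error for $\int\phi^4$ and $\int V\phi^2$ involves second derivatives of the integrands and has no definite sign), this expansion does \emph{not} imply monotonicity in $N$. Consecutive step sizes satisfy $h_N^2 - h_{N+1}^2 = O(h_N^3)$, so $E_{N+1}-E_N = C\bigl(h_N^2-h_{N+1}^2\bigr) + o(h_N^2) = O(h_N^3) + o(h_N^2)$, and the leading term is swallowed by the generic remainder. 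To conclude $E_{N+1}\ge E_N$ for large $N$ you would need the remainder controlled to $O(h^4)$ (or at least $o(h^3)$), i.e.\ a two-term expansion with a uniform higher-order error --- a substantially harder estimate that your linearization sketch does not provide. Your dyadic fallback $N\mapsto 2N$ avoids this particular cancellation (there $h_N^2 - h_{2N}^2 = \tfrac{3}{4}h_N^2$), but it proves a strictly weaker statement than the conjecture and still requires the sign-definiteness you identify as open. Note also that the interior grid points for $N$ and $N+1$ are not nested, so no Galerkin-type subspace monotonicity is available even for the linear part outside the explicit $\sin^2$ formula; your tensorization claim holds for the rectangular domains used in the experiments but not for the L-shaped domain of the paper's second example. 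As it stands, the proposal is a reasonable research program, not a proof.
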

The theoretical analysis for this phenomenon utilizing linear algebraic theory might worth further discussion.
\subsection{Choice of $\rho$}\label{subsec:rho}
In this subsection, we discuss the choice of $\rho$ for ADMM. The Riemannian gradient $nrmG=\|\nabla f(x)-(x^T\nabla f(x))x\|$ via the outer iteration of ADMM are plotted in \cref{fig:rho}. We only show parts of $\rho$ we have tried in the two dimensional case. If the $\rho$ is too small, the algorithm might divergent, while the ones too large will lead to slow convergence.
\begin{figure}[H]
\centering
\includegraphics[width=10cm]{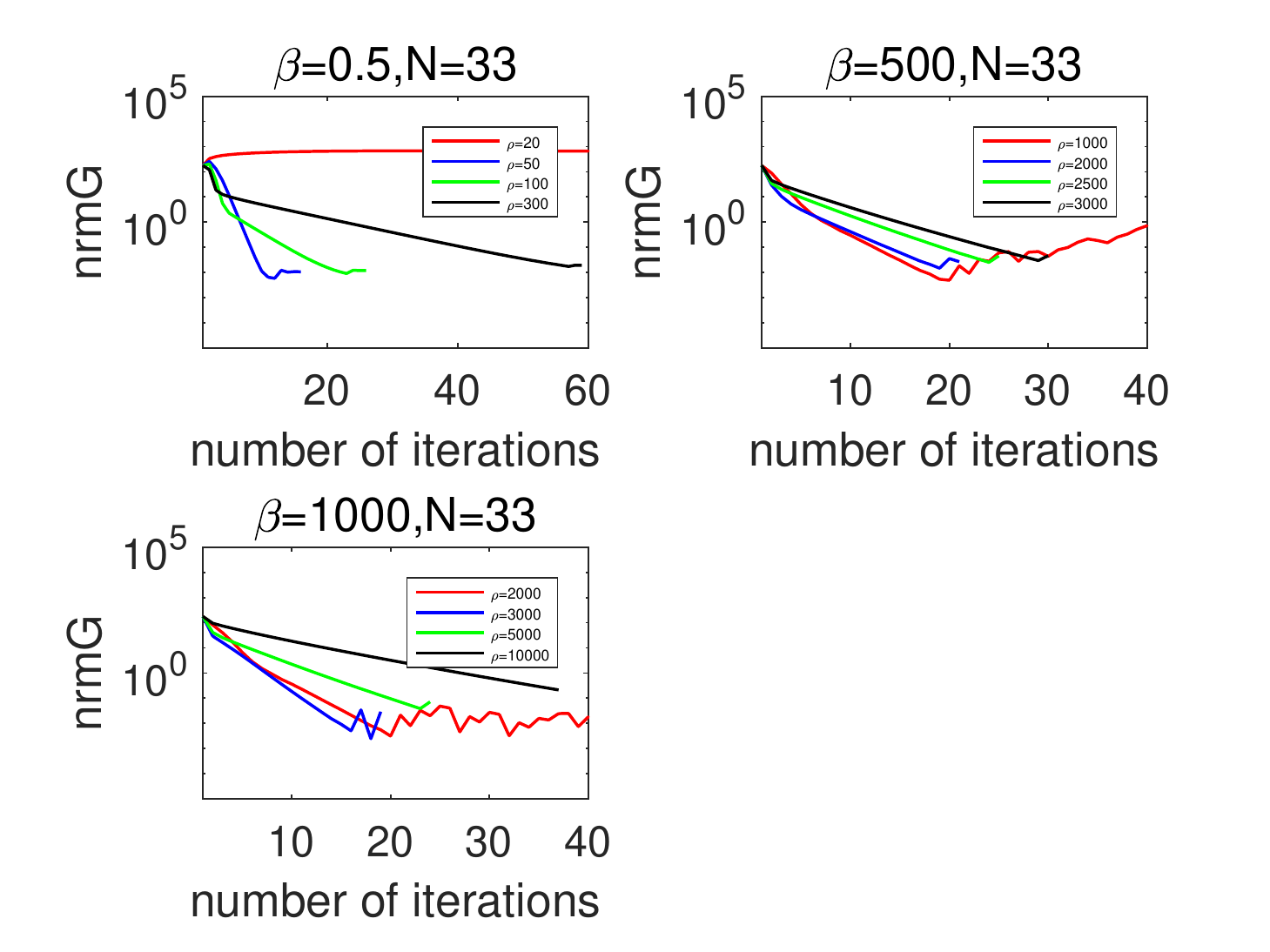}
\caption{Convergence of nrmG for ADMM with different $\rho$, in the two dimensional case with $N=33$.}
\label{fig:rho}
\end{figure}
However, we also found in our experiments, the average running time for each iteration is decreasing as $\rho$ increases, see \cref{fig:rho_time}. In general, the larger the interaction coefficient $\beta$ is, the larger an appropriate $\rho$ will be needed, which is consistent with the condition for $\rho$ in \cref{thm:ADMM}.
\begin{figure}[H]
\centering
\includegraphics[width=6cm]{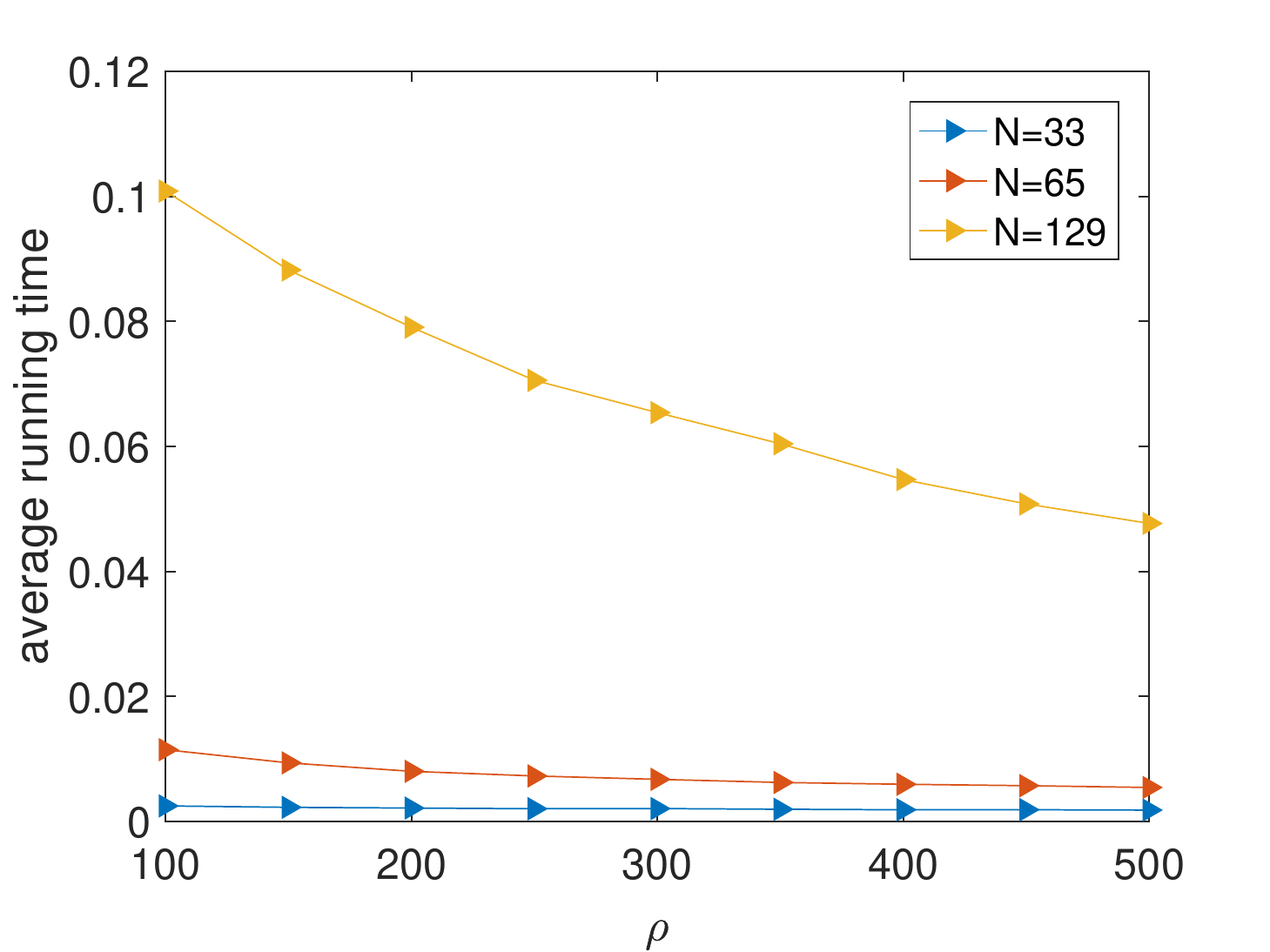}
\caption{The average running time via $\rho$ when $\beta$=0.5 in the two dimensional case.}
\label{fig:rho_time}
\end{figure}
\subsection{Convergence to the global optimum}
To validate our theorem about the global optimum and the convergence of the RN method and ADMM for the problem considered here, we first solve the BEC problem with SDP relaxation method in the case of $\beta=0.5$.

Not surprisingly, the optimal values of the RN method and ADMM solving the discretized BEC problem are the same as the SDP relaxation method. And the entries of the optimizers found by the RN method and ADMM  always have the same sign, which correspond to the smallest eigenvalue of \cref{equ:kkt}, see \cref{tab:ADMM}. Here, the parameter $\rho$ of ADMM is fixed as 100.
\begin{table}[H]
{\footnotesize
{
 \caption{In the first column, $d$ represents the space dimension. The second column shows the smallest eigenvalue corresponding to the solution computed by ADMM. The third and fourth columns are objective values of ADMM and the RN method, respectively. The sdp obj is the optimal value computed by SDP relaxation method. The last column checks whether the solutions obtained by ADMM and the RN method have all entries with the same sign. Y stands for they indeed have the same sign.}
 \label{tab:ADMM}
 }
 \begin{center}
 \begin{tabular}{cccccc}
 \toprule
  & $\lambda$ & ADMM obj & RN obj & sdp obj & sign\\
 \midrule
 d=1,N=257&5.8214&5.4492&5.4492&5.4492&Y\\
 d=1,N=513&5.8214&5.4493&5.4493&5.4493&Y\\
 d=2,N=9&11.1280&10.5802&10.5802&10.5802&Y\\
 d=2,N=17&11.2246&10.6755&10.6755&10.6755&Y\\
 d=3,N=5&16.0687&15.2886&15.2886&15.2886&Y\\
 d=3,N=9&16.6514&15.8564&15.8564&15.8564&Y\\
 \bottomrule
 \end{tabular}
 \end{center}
 }
 \end{table}
%\newpage

Yang  et al. \cite{yang2019numerical} computed all eigenpairs of NEPv\cref{equ:kkt} for the non-rotating BEC problem in small scale, which also displayed numerically that only the smallest eigenvalue has positive eigenvector. The other eigenvalues only have eigenvectors with mixed signs.

In \cref{fig:ground state}, it shows some examples of the discretized ground state computed by ADMM.
\begin{figure}[H]
\centering
\subfloat{\label{fig:a}}\includegraphics[width=6cm]{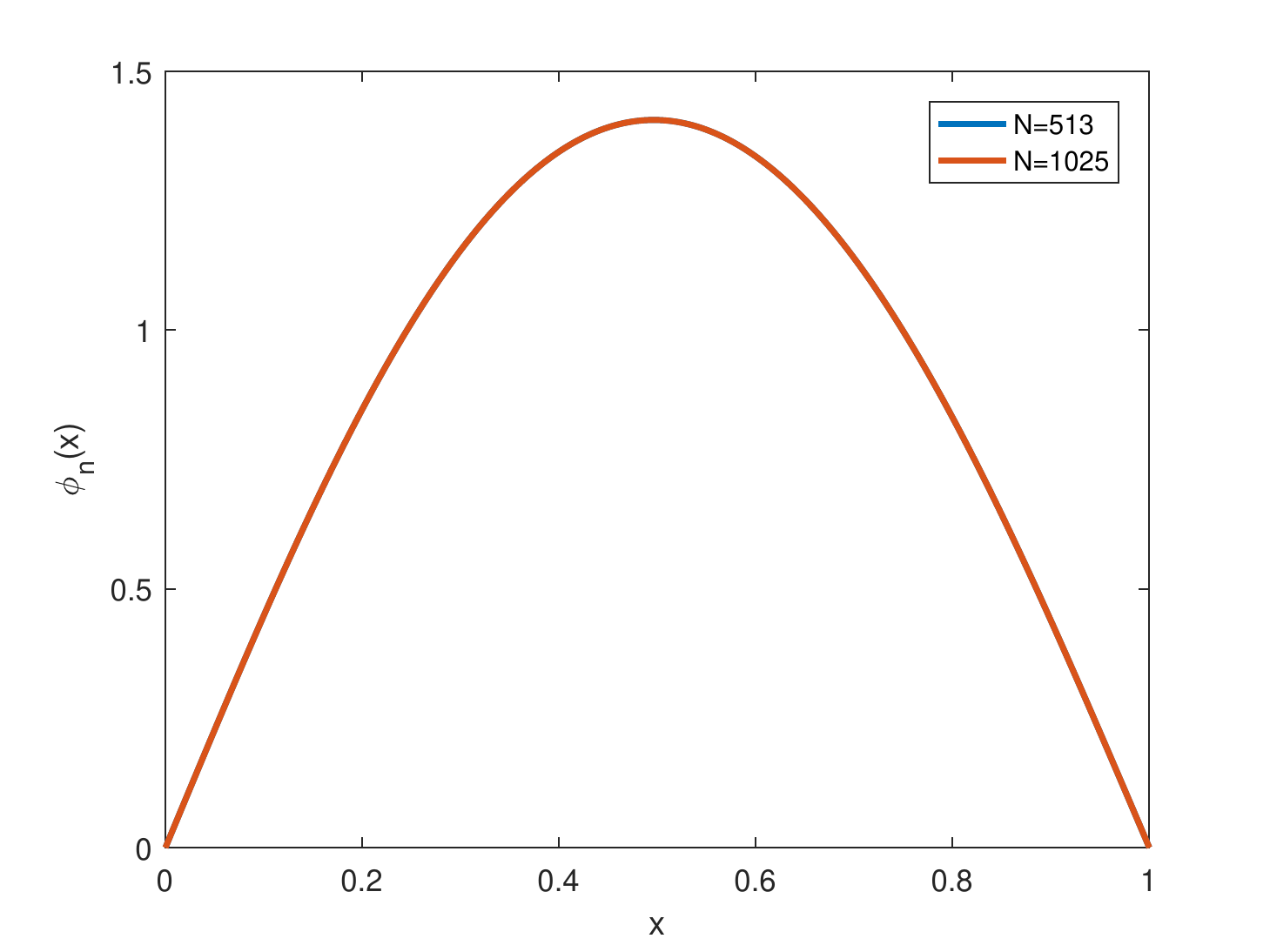}
\subfloat{\label{fig:b}}\includegraphics[width=6cm]{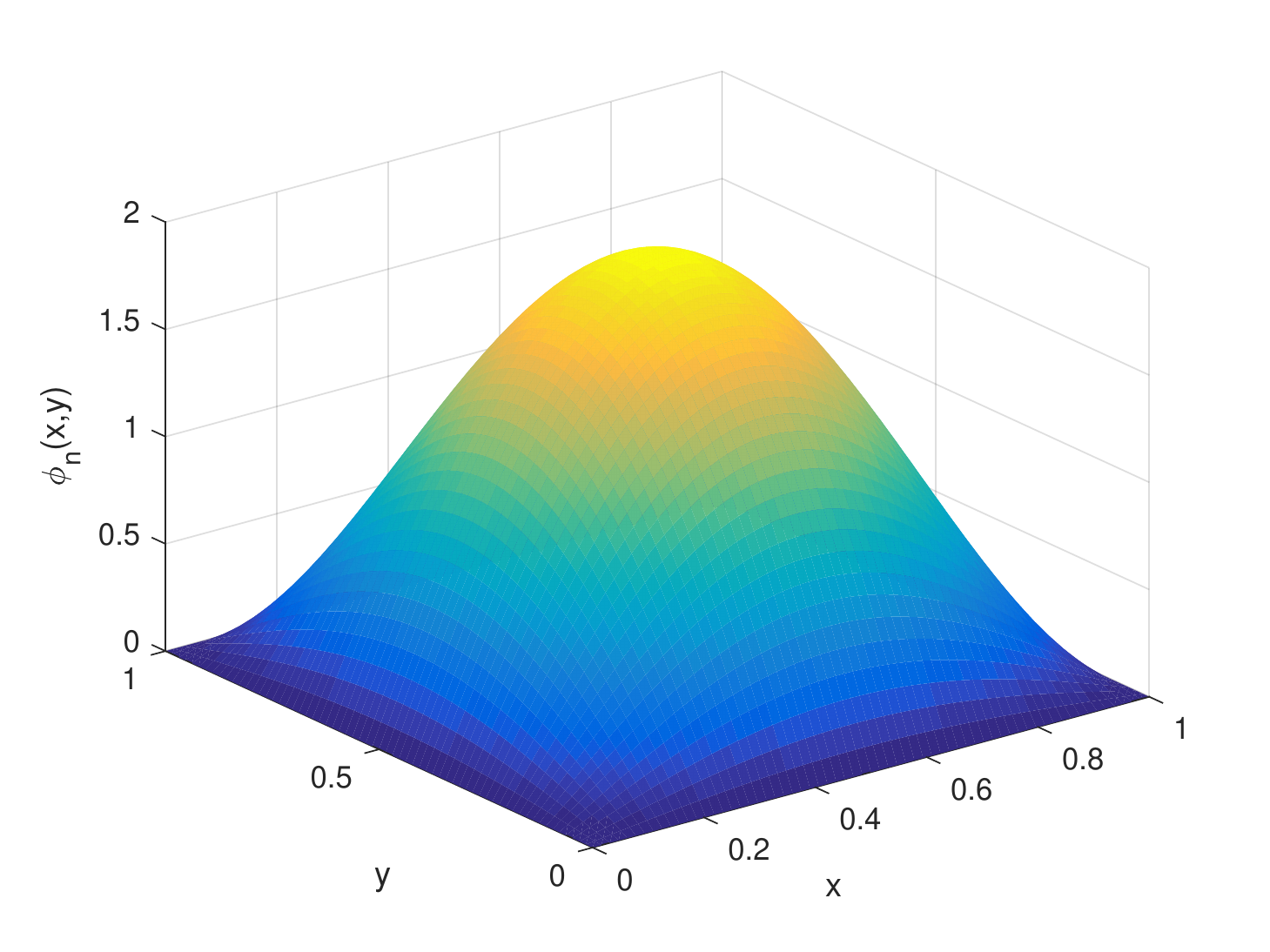}
\caption{The discretized ground state computed by ADMM. The left one is for one dimensional space with 513, 1025 total split points, respectively. The right one is the discretized ground state obtained for two dimensional space with 65 split points in each direction.}
\label{fig:ground state}
\end{figure}

In regard of the necessity of the nonnegative restriction for the RN method to converge to the global optimum, we take the special initial point given by Wu et al. \cite{wu2017regularized} for computing asymmetric excited states. See \cref{exa:excited}.
\begin{example}\label{exa:excited}
Let the domain $D=[-1,1]\times[-1,1]$, $\beta=0.5$, $N=2^4+1$. If the initial data is chosen as $\phi_0(x,y)=\frac{\sqrt{2}x}{\pi^{1/2}}e^{-(x^2+y^2)/2}$, it obtains a stationary point with the objective value 10.1652. Taking the absolute of $X$ in each $X-upstation$ step of the RN method, we always obtain the optimal value as 2.7307, which is the same as the optimal value computed by the convex SDP relaxation problem.
\end{example}
\subsection{Comparison between the RN method and ADMM}
We further compare ADMM with the RN method for the two-dimensional case with $\beta=0.5,~500,~1000$. We refine the mesh from $(2^4+1)\times(2^4+1)$ to $(2^7+1)\times(2^7+1)$  and use the solution computed for coarse mesh as a heuristic initial point for the next refined mesh. Part of the results is presented in the \cref{tab:compare}. The "total iter" columns only count the iteration number for the current $N$, while the "cpu(s)" are the total time from the coarsest grid. When $\beta=0.5$, $\rho=2000$ for $N=2^7+1$ and $\rho=100$ otherwise; when $\beta=500$, $\rho=1000,~1500,~2000,~2000$ for $N=2^4+1$ to $N=2^7+1$; when $\beta=1000$, $\rho=3000$.
\begin{table}[H]
{\footnotesize
    \caption{Comparison between ADMM and the RN method. The columns of total iter show the number of iterations. For RN method, it includes the iteration of feasible method; for ADMM, it includes the inner iteration for solving the unconstrained convex subproblem, and the numbers in brackets stand for the outer ADMM iteration. The columns of cpu are the cumulative time from the coarsest mesh. The nrmG column is the norm of Riemannian gradient $\|\nabla f(x)-(x^T\nabla f(x))x\|$, where $x$ is the computed optimizer, $f(x)$ is the objective function.}
    \label{tab:compare}
  \begin{center}
  %\fontsize{6.5}{8}\selectfont
  \begin{threeparttable}
    \begin{tabular}{ccccccccc}
    \toprule
    \multirow{2}{*}{N}&\multicolumn{4}{c}{RN }&\multicolumn{4}{c}{ADMM}\cr
    \cmidrule(lr){2-5} \cmidrule(lr){6-9}
    &total iter&cpu(s)&obj val&nrmG&total iter&cpu(s)&obj val&nrmG\cr
    \midrule
    \multicolumn{9}{c}{$\beta=0.5$}\cr
    \midrule
    %17&53&0.0333&10.6755&1.1e-4&54(26)&0.0883&10.6755&7.4e-4\cr
    33&67&0.0922&10.6994&4.3e-4&39(19)&0.3975&10.6994&5.1e-3\cr
    65&63&0.2279&10.7054&2.7e-4&37(18)&1.0278&10.7054&6.0e-3\cr
    129&38&0.4475&10.7069&2.0e-3&13(8)&1.5123&10.7069&8.7e-2\cr
    %257&53&1.4225&10.7073&1.6e-3&16(8)&2.6062&10.7073&2.0e-4\cr
    \midrule
    \multicolumn{9}{c}{$\beta=500$}\cr
    \midrule
    17&13&0.0149&315.9526&1.2e-4&59(24)&0.0688&315.9526&2.8e-4\cr
    33&29&0.0483&313.6436&7.9e-5&47(20)&0.1597&313.6436&2.7e-3\cr
    %65&45&0.1552&314.6189&3.8e-5&51(22)&0.4244&314.6189&1.5e-2\cr
    129&67&0.6036&314.8709&1.1e-3&50(22)&1.8588&314.8709&1.7e-2\cr
    %257&88&2.7196&314.9329&7.3e-3&59(25)&14.0953&314.9329&9.5e-5\cr
    \midrule
    \multicolumn{9}{c}{$\beta=1000$}\cr
    \midrule
    17&9&0.1109&601.7806&1.7e-4&46(18)&0.0620&601.7806&3.4e-4\cr
    33&23&0.1432&587.4256&4.4e-4&48(20)&0.1510&587.4526&6.8e-3\cr
    65&55&0.3379&588.7273&2.4e-4&48(20)&0.3965&588.7273&8.4e-3\cr
    129&54&0.6935&589.3950&4.4e-3&47(19)&1.5122&589.3950&3.0e-1\cr
    %257&89&2.7256&589.5566&1.7e-3&50(21)&11.6609&589.5566&7.8e-3\cr
    \bottomrule
    \end{tabular}
    \end{threeparttable}
    \end{center}
}
\end{table}

\cref{tab:3d} presents results for three-dimensional case with $\beta = 0.5$. We refine the mesh from $(2^4+1)\times(2^4+1)\times(2^4+1)$ to $(2^7+1)\times(2^7+1)\times(2^7+1)$. For $N=17$, $\rho = 100$; $N=33$, $\rho = 200$; $N=65$, $\rho=900$; $N=129$, $\rho=30000$.
\begin{table}[H]
{\footnotesize
    \caption{Comparison between ADMM and the RN method in three-dimensional case with $\beta=0.5$.}
    \label{tab:3d}
  \begin{center}
  %\fontsize{6.5}{8}\selectfont
  \begin{threeparttable}
    \begin{tabular}{ccccccccc}
    \toprule
    \multirow{2}{*}{N}&\multicolumn{4}{c}{RN }&\multicolumn{4}{c}{ADMM}\cr
    \cmidrule(lr){2-5} \cmidrule(lr){6-9}
    &total iter&cpu(s)&obj val&nrmG&total iter&cpu(s)&obj val&nrmG\cr
    \midrule
    17&64&0.1902&16.0005&1.3e-4&140(29)&1.0733&16.0005&3.1e-3\cr
    33&90&1.2772&16.0367&1.8e-4&113(36)&4.8561&16.0367&7.1e-3\cr
    65&93&10.6259&16.0457&4.4e-4&77(37)&42.2455&16.0457&1.9e-2\cr
    129&128&107.0279&16.0480&1.2e-3&33(15)&143.9272&16.0481&3.0e-1\cr
    \bottomrule
    \end{tabular}
    \end{threeparttable}
    \end{center}
}
\end{table}

\Cref{fig:convergence} illustrates the convergence of value of the objective function via iteration numbers for ADMM and RN more intuitively in the case of $\beta=0.5$. We start from the same initial point directly without using what computed from the coarse mesh.
\begin{figure}[H]
\centering
\subfloat{\label{fig:a2}}\includegraphics[width=6cm]{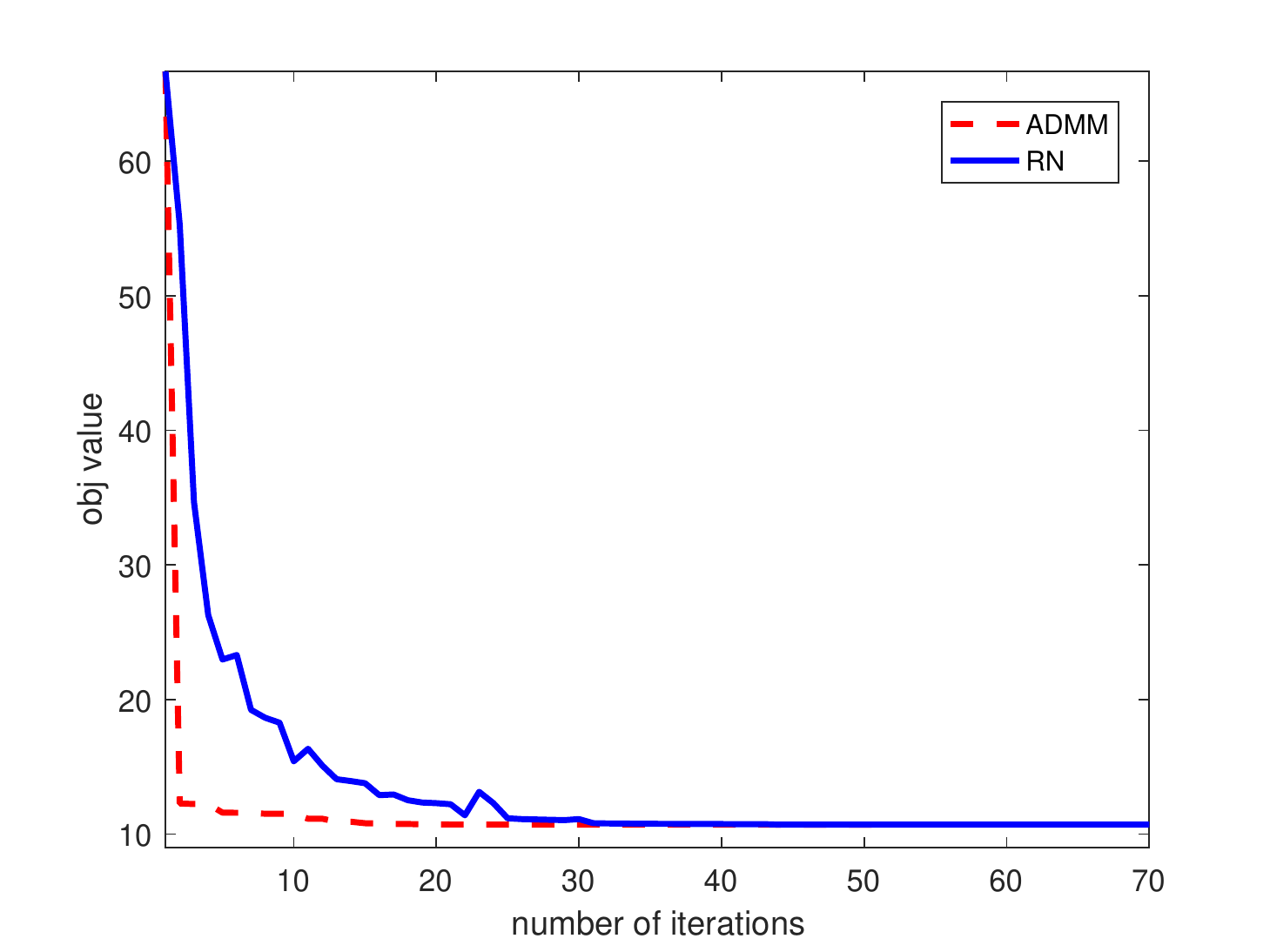}
\subfloat{\label{fig:b2}}\includegraphics[width=6cm]{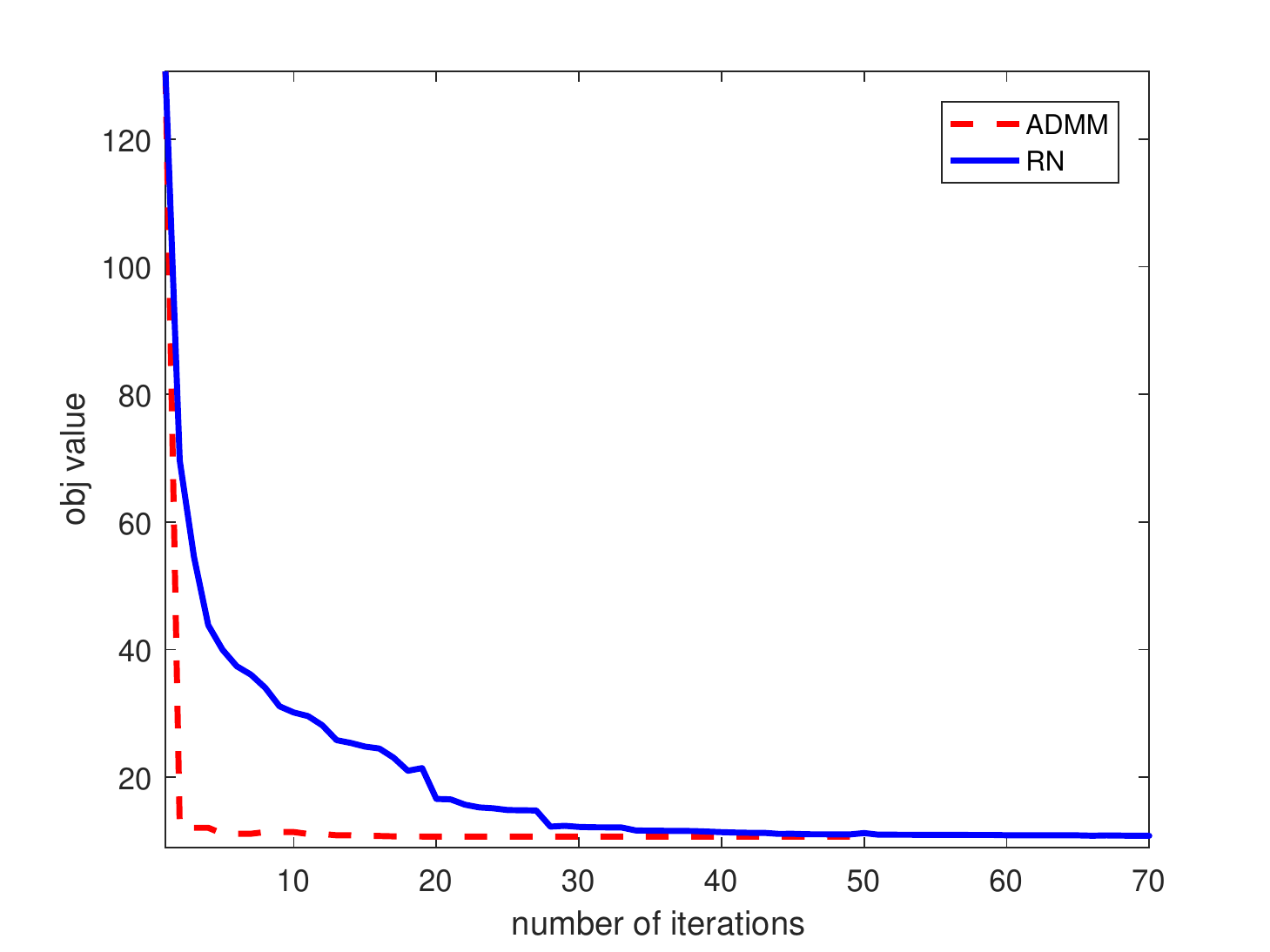}
\caption{Illustration of the objective function value via total iteration numbers for ADMM and RN in the two dimensional case. The left is when the split points along each direction $N=33$, and the right is for $N=65$.}
\label{fig:convergence}
\end{figure}
%\begin{figure}[htbp]
%\centering
%\subfigure[]{
%\begin{minipage}{6cm}
%\centering
%\includegraphics[width=6cm]{32_obj.pdf}
%\end{minipage}
%}
%\subfigure[]{
%\begin{minipage}{6cm}
%\centering
%\includegraphics[width=6cm]{64_obj.pdf}
%\end{minipage}
%}
%\caption{Illustration of the objective function value via iteration numbers for ADMM and RN in the two dimensional case. The left is when the splitting points along each direction $N=33$, and the right is for $N=65$.}
%\label{fig:convergence}
%\end{figure}

From the numerical results of the comparison, we found that although ADMM with $\rho$ selected carefully has the possibility to take fewer total iterations, the inner iteration is not as efficient as expected for large scale problems. And this leads to that it takes more time than the RN method. Other than the choice of $\rho$, solving a linear system in each inner iteration for Newton method is the major bottleneck. For the discretized BEC problem, we may take the advantage of the structure of the Laplacian operator to solve the linear system more efficiently. We will not discuss it within this paper.

%Another interesting phenomenon is that in our examples, the RN method and ADMM always converge to the nonnegative (or nonpositive) points even without nonnegativity restriction no matter what the initial points are. This should imply stronger convergence results than what we have obtained.
\section{Concluding Remarks}
\label{sec:conclude}

We have considered a special nonconvex optimization problem over a spherical constraint and characterized it with a nonlinear eigenvalue problem with eigenvector nonlinearity (NEPv). The properties of NEPv were studied. Attention was paid to the smallest eigenvalue, which corresponds to a unique nonnegative (nonpositive) eigenvector. We established the equivalence between this eigenvector and the global optimum, which can help to determine whether a stationary point found by algorithms is a global optimum. Designing algorithms based on this, convergence to the global minimizer of algorithms can be obtained by trivial modification, like the RN method. The ADMM for this nonconvex minimization problem has proven global convergence to the global minimum. We validated our theories by numerical experiments arising in the discretized non-rotating BEC problem.

The results presented in this work depend on the structure of $B$ strongly. However, the extension to more general cases such as the rotating BEC problem seems not to be straightforward. How to solve the problem when this assumption is relaxed is a subject of our future study. Also as already mentioned, another future work will be the improvement of algorithms for solving the BEC-like problems in large scale, including general accelerated schemes of ADMM, such as \cite{he2016convergence}, dealing with the large scale linear system for the subproblem utilizing the structure of discretized Laplacian operator and other parallelizable algorithms.

\begin{remark}
We noticed that Choi et al. \cite{choi2001generalization,choi2002global} have discussed the unique positive solution of NEPv with any fixed $\lambda>\lambda_{min}(B)$ without the spherical constraint under \cref{ass:structure}. Here, we go further to discuss its relationship with the global optimum of a nonconvex optimization.

Choi et al. proved their theories based on the fixed point theory and the Perron-Frobenius theorem for irreducible nonnegative matrices $B^{-1}$. However, because the norm constraint was not considered, they left an open question about the description of the whole spectrum of $\alpha\mathcal{A}x^3+Bx=\lambda x$. They gave an example that $\alpha\mathcal{A}x^3+Bx=\lambda x$ could have eigenvectors with mixed signs. In this paper, we answer it partially. \cref{lem:eigen} further obtains that for all eigenvectors with the same norm and their related eigenvalues, the positive one corresponds to the smallest eigenvalue. On the other hand, for the smallest eigenvalue which has an eigenvector with the same sign, we prove that it is geometrically simple and will not have eigenvectors with mixed signs.
\end{remark}
\section*{Acknowledgments}
The authors would like to thank Professor Xinming Wu, Dr. Jinshan Zeng and Dr. Xudong Li for their inspiration and help.

\bibliographystyle{siamplain}
\bibliography{ref}

\begin{thebibliography}{10}

\bibitem{bai2018robust}
{\sc Z.~Bai, D.~Lu, and B.~Vandereycken}, {\em Robust rayleigh quotient
  minimization and nonlinear eigenvalue problems}, SIAM J. Sci. Comput., 40
  (2018), pp.~A3495--A3522, \url{https://doi.org/10.1137/18M1167681}.

\bibitem{bao2013mathematical}
{\sc W.~Bao and Y.~Cai}, {\em Mathematical theory and numerical methods for
  bose-einstein condensation}, Kinet. Relat. Models, 6 (2013), pp.~1--135,
  \url{https://doi.org/10.3934/krm.2013.6.1}.

\bibitem{bao2013optimal}
{\sc W.~Bao and Y.~Cai}, {\em Optimal error estimates of finite difference
  methods for the gross-pitaevskii equation with angular momentum rotation},
  Math. Comp., 82 (2013), pp.~99--128,
  \url{https://doi.org/10.1090/S0025-5718-2012-02617-2}.

\bibitem{bao2004computing}
{\sc W.~Bao and Q.~Du}, {\em Computing the ground state solution of
  bose--einstein condensates by a normalized gradient flow}, SIAM J. Sci.
  Comput., 25 (2004), pp.~1674--1697,
  \url{https://doi.org/10.1137/S1064827503422956}.

\bibitem{bao2003ground}
{\sc W.~Bao and W.~Tang}, {\em Ground-state solution of bose--einstein
  condensate by directly minimizing the energy functional}, J. Comput. Phys.,
  187 (2003), pp.~230--254,
  \url{https://doi.org/10.1016/S0021-9991(03)00097-4}.

\bibitem{bauschke2018projecting}
{\sc H.~H. Bauschke, M.~N. Bui, and X.~Wang}, {\em Projecting onto the
  intersection of a cone and a sphere}, SIAM J. Optim, 28 (2018),
  pp.~2158--2188, \url{https://doi.org/https://doi.org/10.1137/17M1141849}.

\bibitem{cai2018eigenvector}
{\sc Y.~Cai, L.-H. Zhang, Z.~Bai, and R.-C. Li}, {\em On an
  eigenvector-dependent nonlinear eigenvalue problem}, SIAM J. Matrix Anal.
  Appl., 39 (2018), pp.~1360--1382, \url{https://doi.org/10.1137/17M115935X}.

\bibitem{cances2010numerical}
{\sc E.~Canc{\`e}s, R.~Chakir, and Y.~Maday}, {\em Numerical analysis of
  nonlinear eigenvalue problems}, J. Sci. Comput., 45 (2010), pp.~90--117,
  \url{https://doi.org/10.1007/s10915-010-9358-1}.

\bibitem{chang2008perron}
{\sc K.-C. Chang, K.~Pearson, and T.~Zhang}, {\em Perron-frobenius theorem for
  nonnegative tensors}, Commun. Math. Sci., 6 (2008), pp.~507--520,
  \url{https://doi.org/10.4310/CMS.2008.v6.n2.a12}.

\bibitem{choi2001generalization}
{\sc Y.~Choi, I.~Koltracht, and P.~McKenna}, {\em A generalization of the
  perron-frobenius theorem for non-linear perturbations of stiltjes matrices},
  Contemporary Mathematics, 281 (2001), pp.~325--330.

\bibitem{choi2002global}
{\sc Y.~Choi, I.~Koltracht, P.~McKenna, and N.~Savytska}, {\em Global monotone
  convergence of newton iteration for a nonlinear eigen-problem}, Linear
  Algebra and its applications, 357 (2002), pp.~217--228,
  \url{https://doi.org/https://doi.org/10.1016/S0024-3795(02)00383-X}.

\bibitem{fetter2009rotating}
{\sc A.~L. Fetter}, {\em Rotating trapped bose-einstein condensates}, Rev.
  Modern Phys., 81 (2009), pp.~647--691,
  \url{https://doi.org/10.1103/RevModPhys.81.647}.

\bibitem{he2016convergence}
{\sc B.~He, F.~Ma, and X.~Yuan}, {\em Convergence study on the symmetric
  version of admm with larger step sizes}, SIAM J. Imaging Sci., 9 (2016),
  pp.~1467--1501, \url{https://doi.org/10.1137/15M1044448}.

\bibitem{hu2016note}
{\sc J.~Hu, B.~Jiang, X.~Liu, and Z.~Wen}, {\em A note on semidefinite
  programming relaxations for polynomial optimization over a single sphere},
  Sci. China Math., 59 (2016), pp.~1543--1560,
  \url{https://doi.org/10.1007/s11425-016-0301-5}.

\bibitem{hu2018adaptive}
{\sc J.~Hu, A.~Milzarek, Z.~Wen, and Y.~Yuan}, {\em Adaptive quadratically
  regularized newton method for riemannian optimization}, SIAM J. Matrix Anal.
  Appl., 39 (2018), pp.~1181--1207, \url{https://doi.org/10.1137/17M1142478}.

\bibitem{jia2016full}
{\sc S.~Jia, H.~Xie, M.~Xie, and F.~Xu}, {\em A full multigrid method for
  nonlinear eigenvalue problems}, Sci. China Math., 59 (2016), pp.~2037--2048,
  \url{https://doi.org/10.1007/s11425-015-0234-x}.

\bibitem{lai2014splitting}
{\sc R.~Lai and S.~Osher}, {\em A splitting method for orthogonality
  constrained problems}, J. Sci. Comput., 58 (2014), pp.~431--449,
  \url{https://doi.org/10.1007/s10915-013-9740-x}.

\bibitem{li2018qsdpnal}
{\sc X.~Li, D.~Sun, and K.-C. Toh}, {\em Qsdpnal: a two-phase augmented
  lagrangian method for convex quadratic semidefinite programming},
  Mathematical Programming Computation, 10 (2018), pp.~703--743,
  \url{https://doi.org/https://doi.org/10.1007/s12532-018-0137-6}.

\bibitem{nocedal2006numerical}
{\sc J.~Nocedal and S.~Wright}, {\em Numerical optimization}, Springer, New
  York, 2006.

\bibitem{pethick2008bose}
{\sc C.~J. Pethick and H.~Smith}, {\em Bose--Einstein condensation in dilute
  gases}, Cambridge university press, Cambridge, 2008.

\bibitem{rockafellar2009variational}
{\sc R.~T. Rockafellar and R.~J.-B. Wets}, {\em Variational analysis},
  Springer, New York, 2009.

\bibitem{varga1962iterative}
{\sc R.~S. Varga}, {\em Matrix Iterative analysis}, Springer, New York, 2000.

\bibitem{wang2019global}
{\sc Y.~Wang, W.~Yin, and J.~Zeng}, {\em Global convergence of admm in
  nonconvex nonsmooth optimization}, J. Sci. Comput., 78 (2019), pp.~29--63,
  \url{https://doi.org/10.1007/s10915-018-0757-z}.

\bibitem{wen2013adaptive}
{\sc Z.~Wen, A.~Milzarek, M.~Ulbrich, and H.~Zhang}, {\em Adaptive regularized
  self-consistent field iteration with exact hessian for electronic structure
  calculation}, SIAM J. Sci. Comput., 35 (2013), pp.~A1299--A1324,
  \url{https://doi.org/10.1137/120894385}.

\bibitem{wen2013feasible}
{\sc Z.~Wen and W.~Yin}, {\em A feasible method for optimization with
  orthogonality constraints}, Math. Program., 142 (2013), pp.~397--434,
  \url{https://doi.org/10.1007/s10107-012-0584-1}.

\bibitem{wu2017regularized}
{\sc X.~Wu, Z.~Wen, and W.~Bao}, {\em A regularized newton method for computing
  ground states of bose--einstein condensates}, J. Sci. Comput., 73 (2017),
  pp.~303--329, \url{https://doi.org/10.1007/s10915-017-0412-0}.

\bibitem{yang2019numerical}
{\sc Q.~Yang, P.~Huang, and Y.~Liu}, {\em Numerical examples for solving a
  class of nonlinear eigenvalue problems (in {Chinese})}, J. Numer. Methods
  Comput. Appl., 40 (2019), pp.~130--142.

\bibitem{yang2012solving}
{\sc Y.~Yang and Q.~Yang}, {\em On solving biquadratic optimization via
  semidefinite relaxation}, Comput. Optim. Appl., 53 (2012), pp.~845--867,
  \url{https://doi.org/10.1007/s10589-012-9462-2}.

\bibitem{zhang2019geometric}
{\sc H.~Zhang, A.~Milzarek, Z.~Wen, and W.~Yin}, {\em On the geometric analysis
  of a quartic-quadratic optimization problem under a spherical constraint},
  aug 2019, \url{https://arxiv.org/abs/1908.00745}.

\end{thebibliography}
\end{document}